\title{Characteristic polynomials of the weak order on classical and affine
  Coxeter groups}
\author{Jang Soo Kim}
\address{
Department of Mathematics, Sungkyunkwan University, Suwon,
South Korea}
\email{jangsookim@skku.edu}
\author{Sun-mi Yun}
\address{
Department of Mathematics, Sungkyunkwan University, Suwon,
South Korea}
\email{sera314@skku.edu}
\date{\today}
\keywords{weak order, Coxeter group, characteristic polynomial}
\newtheorem{thm}{Theorem}[section]
\newtheorem{lem}[thm]{Lemma}
\newtheorem{prop}[thm]{Proposition}
\newtheorem{cor}[thm]{Corollary}
\theoremstyle{definition}
\newtheorem{defn}[thm]{Definition}
\newcommand\SG{\mathfrak{S}}
\newcommand\Alt{\operatorname{Alt}}
\newcommand\flr[1]{\left\lfloor #1\right\rfloor}
\newcommand\MD{\mathcal{D}}
\newcommand\wt{\operatorname{wt}}
\begin{document}

\begin{abstract}
  We find a simple product formula for the characteristic polynomial of the
  permutations with a fixed descent set under the weak order. As a corollary we
  obtain a simple product formula for the characteristic polynomial of
  alternating permutations. We generalize these results to Coxeter
  groups. We also find a formula for the generating function for the
  characteristic polynomials of classical Coxeter groups, which is then related
  to affine Coxeter groups.
\end{abstract}

\maketitle

\section{Introduction}

The M\"obius function of a poset is an important topic of study in many areas of
mathematics including number theory, topology, and combinatorics. For example,
if the poset is the set of positive integers ordered by the division relation,
then its M\"obius function is the classical M\"obius function in number theory.
The M\"obius function of a poset expresses the reduced Euler characteristic of
the simplicial complex coming from the poset. The principle of inclusion and
exclusion is also generalized in terms of M\"obius functions. See \cite[Chapter
3]{EC1} for more details on M\"obius functions.

The characteristic polynomial of a finite ranked poset is a generating function
for the M\"obius function on the poset. A useful application of characteristic
polynomials is that one can compute the number of regions and bounded regions of
a hyperplane arrangement using the characteristic polynomial of its intersection
poset, see \cite{Stanley2004}.

There are many interesting posets whose characteristic polynomials factor
nicely. For example, the characteristic polynomials of the Boolean poset and the
partition poset are given by \( (q-1)^n \) and \( (q-1)(q-2)\cdots (q-n+1) \)
respectively. See \cite{Blass1997, Sagan1999, Stanley1972} for more
examples.

The main objective of this paper is to study the characteristic polynomials of
the (left) weak order on classical and affine Coxeter groups. The motivation of
this paper was the following observation on the characteristic polynomial of the
poset $\Alt_n$ of alternating permutations ordered by the weak order. Here an
alternating permutation is a permutation $\pi=\pi_1\pi_2\pi_3\dots\pi_n$
satisfying $\pi_1<\pi_2>\pi_3<\cdots$.

\begin{thm}\label{thm:alt}
  The characteristic polynomial of $\Alt_n$ is
  \[
    \chi_{\Alt_n}(q)=q^{\binom{n-1}{2}-\flr{\frac{n}{2}}}(q-1)^{\flr{\frac{n}{2}}}.
  \]    
\end{thm}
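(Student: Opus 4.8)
The first step is to recognize $\Alt_n$ inside the weak order on $\SG_n$ as the descent class $D_S=\{\pi\in\SG_n : \operatorname{Des}(\pi)=S\}$ for $S=\{2,4,6,\dots\}\cap[n-1]$. Such a descent class is an interval $[w_-,w_+]$ of the weak order: if $w_-\le\pi\le w_+$ then $\Inv(w_-)\subseteq\Inv(\pi)\subseteq\Inv(w_+)$ forces, for every $i$, whether $(i,i+1)\in\Inv(\pi)$, hence $\operatorname{Des}(\pi)=S$; and since the weak order is a lattice, $D_S$ has a unique minimum $w_-$ and a unique maximum $w_+$. Thus $\Alt_n$ is graded, $\chi_{\Alt_n}$ is well defined, and Theorem~\ref{thm:alt} becomes the case $S=\{2,4,\dots\}$ of a product formula for the characteristic polynomial of an arbitrary descent class; the plan is to prove that general formula and then specialize.

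For the general formula I would combine two ingredients. First, the interval isomorphism $[w_-,w_+]\cong[e,u]$ with $u=w_+w_-^{-1}$, realized by $\pi\mapsto\pi w_-^{-1}$, which reduces everything to a lower interval. Second, Bj\"orner's computation of the M\"obius function of the weak order: $\mu(e,v)=(-1)^{|J|}$ if $v$ is the longest element $w_0(J)$ of a finite standard parabolic subgroup $W_J$, and $\mu(e,v)=0$ otherwise. Since $w_0(J)$ reverses each block of positions spanned by a maximal run of $J$, one has $w_0(J)\le u$ in the weak order exactly when $u$ is strictly decreasing along each such block, i.e.\ exactly when $J\subseteq\operatorname{Des}(u)$. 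Summing $\mu(e,v)\,q^{\ell(u)-\ell(v)}$ over $v\le u$ then yields
\[
  \chi_{[e,u]}(q)=q^{\ell(u)}\sum_{J\subseteq\operatorname{Des}(u)}(-1)^{|J|}q^{-\ell(w_0(J))},
\]
and because $W_J$ is the direct product of the parabolics attached to the maximal runs of consecutive integers in $J$, the length $\ell(w_0(J))$ is additive over those runs and the sum factors as a product over the maximal runs of $\operatorname{Des}(u)$, a run of length one contributing the factor $1-q^{-1}$.

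What is left is the specialization, and this is where I expect the real work to be. The composition of $n$ attached to $S=\{2,4,\dots\}$ is $\alpha=(2,2,\dots,2)$, possibly ending in a $1$; here $w_-=1\,3\,2\,5\,4\,7\,6\cdots$, so $\inv(w_-)=|S|=\flr{(n-1)/2}$, while $w_+$ is the alternating permutation obtained by filling the blocks of $\alpha$ with the largest available values first, so $\inv(w_+)=\tfrac12\bigl(n^2-\sum_i\alpha_i^2\bigr)$; subtracting gives $\ell(u)=\binom{n-1}{2}$. The crux is to determine the one-line notation of $u=w_+w_-^{-1}$ precisely enough to check that $\operatorname{Des}(u)=\{1,3,5,\dots,2\flr{n/2}-1\}$. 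This set has no two consecutive elements, so each of its runs has length one, the product above reduces to $(1-q^{-1})^{\flr{n/2}}$, and we obtain
\[
  \chi_{\Alt_n}(q)=q^{\binom{n-1}{2}}\bigl(1-q^{-1}\bigr)^{\flr{n/2}}=q^{\binom{n-1}{2}-\flr{n/2}}(q-1)^{\flr{n/2}}.
\]
One could alternatively try to prove a recursion such as $\chi_{\Alt_n}(q)=q^{2n-6}(q-1)\,\chi_{\Alt_{n-2}}(q)$ directly from the poset structure and induct, but understanding how $\Alt_n$ sits over $\Alt_{n-2}$ in the weak order looks no easier than the M\"obius-theoretic route.
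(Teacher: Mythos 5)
Your outline follows the paper's own route almost step for step: realize $\Alt_n$ as the descent class $\MD_I$ with $I=\{2,4,\dots\}\cap[n-1]$, use that descent classes are weak-order intervals $[w_-,w_+]$, translate down to $[e,u]$ with $u=w_+w_-^{-1}$, apply the M\"obius formula $\mu(e,v)=(-1)^{|J|}$ for $v=w_0(J)$ together with the fact that $w_0(J)\le u$ exactly when $J\subseteq D_R(u)$, and factor the resulting alternating sum over the maximal runs of consecutive integers in $D_R(u)$, an isolated run contributing $1-q^{-1}$. All of this is correct and is precisely Propositions~\ref{prop:max} and \ref{prop:interval} and Theorem~\ref{thm:chdecom} of the paper. (One small caveat: ``since the weak order is a lattice, $D_S$ has a unique minimum and maximum'' is a non sequitur --- a subset of a lattice need not contain its own meet and join. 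The interval property of descent classes is the Bj\"orner--Wachs theorem, which the paper simply cites; alternatively you must exhibit $w_-=w_0(I)$ and $w_+=w_0w_0(I^c)$ and check they lie in and bound $\MD_I$.)

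The genuine gap is the step you yourself label ``the crux'': the claim that $D_R(u)=\{1,3,5,\dots\}$ consists of $\flr{\frac{n}{2}}$ isolated elements is asserted but never proved, and it is not a routine verification --- it is exactly where the exponent $\flr{\frac{n}{2}}$ comes from, so without it the theorem is not established. The paper devotes Lemmas~\ref{lem:ndes} and \ref{lem:iplus} to this point: it proves $D_R(w_0w_0(J^c)w_0(I))=(J\cup I^+)\setminus I$, where $I^+$ is the set of generators failing to commute with some element of $I$, and the proof of Lemma~\ref{lem:ndes} requires a real argument via Tits' word property (braid moves) to show that a generator $v\notin I$ satisfies $v\notin D_R(vw_0(I))$ precisely when $v$ does not commute with some element of $I$. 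Specializing $I=J=\{2,4,\dots\}$ then gives $I^+\setminus I=\{1,3,5,\dots\}\cap[n-1]$, which is the set you want. Your alternative plan --- computing the one-line notation of $u=w_+w_-^{-1}$ explicitly and reading off its descent set --- is available in type $A$ and likely feasible, since $w_0(I)$ and $w_0w_0(I^c)$ are explicit, but you have not carried it out; as written, the proof is incomplete at its decisive step.
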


Let $\SG_n$ denote the symmetric group of order \( n \), that is, the set of
permutations on \( [n]:=\{1,2,\dots,n\} \). Note that an element in \( \SG_n \)
is an alternating permutation if and only if its (right) descent set is equal to
\( \{2,4,6,\dots\}\cap [n] \). In Theorem~\ref{thm:fxch} we show that the
characteristic polynomial of the subposet of $\SG_n$ with a fixed descent set
has a simple factorization. Theorem~\ref{thm:alt} then follows immediately from
Theorem~\ref{thm:fxch}.

A Coxeter group is a group defined by generators and certain relations. Coxeter
groups are also studied in many different areas of mathematics, and their
classification is well known. In particular, the symmetric group $\SG_n$ is the
Coxeter group of type $A_{n-1}$, see \cite[Proposition 1.5.4]{BjornerBrenti}.
There are two important orders on Coxeter groups, the Bruhat order and the
(left) weak order. In this paper we will only consider the weak order. See
\cite{Bjorner1984,BjornerBrenti,Jedlivcka2005,Tenner2020} and references therein
for combinatorial properties of the weak order.

In search of a generalization of Theorem~\ref{thm:alt} we study characteristic
polynomials of Coxeter groups under the weak order. By slightly modifying the
definition of the characteristic polynomial so that it can be defined on an
infinite poset, we also compute (modified) characteristic polynomials of affine
Coxeter groups.

The rest of this paper is organized as follows.

In Section~\ref{sec:preliminaries}, we give some definitions and known results
on posets and Coxeter groups. In Section~\ref{sec:char-polyn-an}, we show that
the characteristic polynomial of an interval of a Coxeter group with the weak
order is decomposed into the product of characteristic polynomials of its
subgroups. In Section~\ref{sec:prop-desc-class}, we show that the descent class
of a Coxeter group is an interval. In Section~\ref{sec:perm-with-fixed} we give
a simple product formula for the characteristic polynomial of the set of
permutations with a fixed descent set. In Section~\ref{sec:modif-char-polyn}, we
slightly modify the characteristic polynomial so that it is defined for affine
Coxeter groups. We then compute the generating functions for the modified characteristic
polynomials of the classical Coxeter groups \( A_n, B_n \), and \( D_n \).
Finally, in Section~\ref{sec:modif-char-polyn-1}, we express the modified
characteristic polynomials for affine Coxeter groups \( \tilde{A}_n,
\tilde{B}_n, \tilde{C}_n, \) and \( \tilde{D}_n \) in terms of the corresponding
finite Coxeter groups.

\section{Preliminaries}
\label{sec:preliminaries}

In this section, we give basic definitions and results on posets and Coxeter groups.

\subsection{Posets}

Let $(P,\le)$ be a poset. For \( x,y\in P \) we write \( x<y \) to mean \( x\le
y \) and \( x\ne y \). For $x,y\in P$, if $x< y$ and there is no element $t\in
P$ such that $x<t<y$, then we say that \emph{$y$ covers $x$} and denote by
$x\lessdot y$. The \emph{(closed) interval} $[x,y]$ is the subset $\{t\in P:x\le
t\le y\}$ of $P$. If there is an element $x\in P$ such that $x\le y$
(resp.~$x\ge y$) for all $y\in P$, then \( x \) is called the \emph{bottom}
(resp.~the \emph{top}) of $P$ and is denoted by $\hat{0}$
(resp.~$\hat{1}$). 

Let $C=\{t_1,t_2,\dots,t_k\}$ be a subset of $P$. If any two elements of $C$ are
comparable, then $C$ is called a \emph{chain} of $P$. The \emph{length} of a
chain $C$ is defined to be $|C|$. A chain of \( P \) is called a \emph{maximal
  chain} if it is not contained in any other chain of \( P \). If every maximal
chain of $P$ has the same length, say $m$, then $P$ is called a \emph{ranked
  poset} (or a \emph{graded poset}). In this case, there is a unique \emph{rank
  function} $r_P :P\rightarrow\{0,1,2,\dots,m\}$ such that $r_P(t)=0$ for every
minimal element $t\in P$ and $r_P(y)=r_P(x)+1$ if $x\lessdot y$. We say that
$r_P(x)$ is the \emph{rank} of $x\in P$. The length $m$ of a maximal chain of
$P$ is called the \emph{rank} of $P$ and denoted by $r_P(P)$.

For two elements $x,y$ of $P$, a \emph{lower bound} of $x$ and $y$ is an element
$u$ such that $u\le x$ and $u\le y$. If $u$ is a lower bound of $x$ and $y$
satisfying $v\le u$ for any lower bound $v$ of $x$ and $y$, then $u$ is called
the \emph{meet} of $x$ and $y$ and denoted by $x\wedge y$. Similarly, an
\emph{upper bound} of $x$ and $y$ is an element $u$ such that $x\le u$ and $y\le
u$. If \( u \) is an upper bound of $x$ and $y$ satisfying $u\le v$ for any
upper bound $v$ of $x$ and $y$, then $u$ is called the \emph{join} of $x$ and
$y$ and denoted by $x\vee y$. For a subset $A=\{a_1,a_2,\dots,a_i\}$ of $P$, we
denote by $\bigvee A$ the join of all elements in $A$, i.e., $\bigvee A=a_1\vee
a_2\vee\cdots\vee a_i$.

The \emph{M\"{o}bius function} $\mu_P$ of $P$ is the unique function 
defined on the pairs \( (x,y) \) of elements \( x,y\in P \) with \( x\le y \) satisfying the following condition:
\[
  \sum_{x\le t\le y} \mu_P(x,t)= \delta_{x,y} \qquad \mbox{for all \( x\le y \) in \( P \),}
\]
where \( \delta_{x,y} \) is \( 1 \) if \( x=y \) and \( 0 \) otherwise.

\begin{defn}
  Let $P$ be a finite ranked poset with $\hat{0}$. The \emph{characteristic
    polynomial} $\chi_P(q)$ of $P$ is defined by
  \[
    \chi_P(q)=\sum_{t\in P}\mu_P(\hat{0},t)q^{r_P(P)-r_P(t)}.
  \]
\end{defn}

\subsection{Coxeter groups}

Let \( S \) be a set and let \( m:S\times S\to \{1,2,\dots,\infty\} \) be a
function such that $m(s,s)=1$ and $m(s,s')=m(s',s)\ge 2$ for $s\neq s'$. Let \(
W \) be the group defined by the generators \( S \) and the relations \(
(ss')^{m(s,s')}=e \) for all \( s,s'\in S \), where \( e \) is the identity
element. In this case we say that $W$ is a \emph{Coxeter group}, $S$ is a set of
\emph{Coxeter generators}, and the pair $(W,S)$ is a \emph{Coxeter system}.
Note that, for any Coxeter generators $s,s'$ with $s\neq s'$, we have
\begin{equation}\label{eq:braid}
  s^2=e\mbox{\qquad and\qquad} \overbrace{ss'ss's\cdots}^{m(s,s')}=\overbrace{s'ss'ss'\cdots}^{m(s,s')}.  
\end{equation}
In particular, distinct Coxeter generators $s$ and $s'$ commute if and only if $m(s,s')=2$. 

A \emph{Coxeter graph} (or a \emph{Coxeter diagram}) is a graph such that the nodes are the Coxeter generators, 
nodes $s$ and $s'$ are connected by an edge if $m(s,s')\ge 3$, and the edge has the label $m(s,s')$ if $m(s,s')\ge 4$. 
A Coxeter system $(W,S)$ is \emph{irreducible} if the Coxeter graph of $(W,S)$ is connected. 
The irreducible Coxeter groups have been classified and every reducible Coxeter group 
decomposes uniquely into a product of irreducible Coxeter groups \cite[p.4]{BjornerBrenti}.

From now on, we assume that $(W,S)$ is a Coxeter system.

Since $S$ is a set of generators of $W$, each
element $w\in W$ can be represented as a product of generators, say
$w=s_1s_2\dots s_k$, where $s_i\in S$. Among all such expressions for $w$, the
smallest $k$ is called the \emph{length} of $w$ and denoted by $\ell(w)$. If \(
k=\ell(w) \), then the expression $s_1s_2\dots s_k$ is called a \emph{reduced
  word} for $w$.

Let \(\alpha_{s,s'}=\overbrace{ss'ss's\cdots}^{m(s,s')}\). Then the second
equation in \eqref{eq:braid} is rewritten by \(\alpha_{s,s'}=\alpha_{s',s}\).
The replacement of \(\alpha_{s,s'}\) by \(\alpha_{s',s}\) in a word is called a
\emph{braid-move}. It is known that every two reduced words of \(w\in W\) is
connected via a sequence of braid-moves, see \cite{Tits1969}.

\begin{defn}
  The Coxeter generators $s\in S$ are called \emph{simple reflections}. 
  For \( w\in W \), we define the following sets:
  \begin{align*}
    &D_L(w)=\{s\in S:\ell(sw)<\ell(w)\},\\
    &D_R(w)=\{s\in S:\ell(ws)<\ell(w)\}.
  \end{align*}
  The set $D_L(w)$ (resp.~$D_R(w))$ is called the \emph{left} (resp.~\emph{right})
  \emph{descent set} of \( w \) and its elements are called \emph{left}
  (resp.~\emph{right}) \emph{descents} of \( w \). Note that $D_L(w)=D_R(w^{-1})$.
\end{defn}

\begin{defn}
  For $I\subseteq J\subseteq S$, the set $\MD^J_I=\{w\in W:I\subseteq
  D_R(w)\subseteq J\}$ is called a \emph{(right) descent class}. We also denote
  $\MD_I=\MD^I_I=\{w\in W:D_R(w)=I\}$ and $W^J=\MD^{S\setminus
    J}_\emptyset=\{w\in W:D_R(w)\subseteq S\setminus J\}$. 
  For
  $J\subseteq S$, the subgroup of $W$ generated by $J$ is called a \emph{parabolic
    subgroup} and denoted by $W_J$. 
\end{defn}

Note that $(W_J,J)$ is also a Coxeter system \cite[Proposition
2.4.1]{BjornerBrenti}.

\begin{defn}
  For $u,w\in W$, we denote by $u\le w$ if $w=s_k\dots s_2s_1u$ for some $s_i\in
  S$ with $\ell(s_i\dots s_2s_1u)=\ell(u)+i$ for $0\le i\le k$. This partial
  order $\le$ is called the \emph{(left) weak order}.
\end{defn}

Throughout this paper we consider a Coxeter group \(W\) as a poset using the
weak order.

If $W$ is finite then $W$ is a lattice \cite{Bjorner1984}. In this case
there exists the top element of \( W \), which we denote by $w_0$. For some
$J\subseteq S$, if $W_J$ is finite, then we denote the top element of $W_J$ by
$w_0(J)$. Since the weak order on $W$ is graded, the rank function is defined on
\( W \). By the definition of the weak order, the rank and the length of each
element are the same.

Let $\mu_W$ denote the M\"{o}bius function of $W$. For $u,w\in W$, it is known
that $\mu_W$ only takes the values \( 0,1 \), and \( -1 \).

\begin{prop}\cite[Corollary 3.2.8]{BjornerBrenti}
  \label{prop:mbf}
  The M\"{o}bius
  function $\mu_W(u,w)$ of $W$ is given by
  \[
    \mu_W(u,w)=
    \begin{cases}
      (-1)^{|J|} & \mbox{if $w=w_0(J)u$ for some $J\subseteq S$,}\\
      0 & \mbox{otherwise.}
    \end{cases}
  \]
\end{prop}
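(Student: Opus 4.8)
The plan is to reduce the statement to the computation of $\mu_W(e,v)$ for intervals of the form $[e,v]$, and then to run the defining recursion for the M\"obius function by induction on $\ell(v)$, feeding in the basic structure theory of parabolic subgroups.

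\emph{Reduction to $[e,v]$.} Suppose $u\le w$ and set $v:=wu^{-1}$; by the definition of the weak order, $w=vu$ with $\ell(w)=\ell(v)+\ell(u)$. First I would show that right multiplication $x\mapsto xu$ is a poset isomorphism from $[e,v]$ onto $[u,w]$: if $x\le v$, then writing $v=yx$ with $\ell(v)=\ell(y)+\ell(x)$ gives $w=yxu$, and comparing lengths forces both $\ell(xu)=\ell(x)+\ell(u)$ and $\ell(w)=\ell(y)+\ell(xu)$, so the map sends $[e,v]$ into $[u,w]$ and is order preserving; the same argument applied to $y\mapsto yu^{-1}$ shows it is an isomorphism. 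Hence $\mu_W(u,w)=\mu_W(e,v)$. Since, for $u\le w$, the identity $w=w_0(J)u$ is equivalent to $v=w_0(J)$, and since the top element of a finite parabolic subgroup $W_J$ has right descent set exactly $J$ (so that $J$ is recovered from $v$ as $J=D_R(v)$), the proposition becomes the assertion that $\mu_W(e,v)=(-1)^{|D_R(v)|}$ if $v=w_0(D_R(v))$, and $\mu_W(e,v)=0$ otherwise.

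\emph{Two structural facts about $[e,v]$.} Next I would record: (i) the atoms of $[e,v]$ are exactly the simple reflections lying in $D_R(v)$ (immediate from the definitions); and (ii) if $K\subseteq D_R(v)$, then $W_K$ is finite, $w_0(K)\le v$, $[e,w_0(K)]=W_K$ as posets, and $w_0(K)=v$ only if $K=D_R(v)$. For (ii) I would use the standard parabolic decomposition $v=v^K v_K$ with $v_K\in W_K$, $\ell(v)=\ell(v^K)+\ell(v_K)$, and $D_R(v)\cap K=D_R(v_K)$: the hypothesis $K\subseteq D_R(v)$ gives $D_R(v_K)=K$, so $v_K$ is a maximal element of $W_K$, whence $W_K$ is finite, $v_K=w_0(K)$, and the length-additivity says exactly that $w_0(K)\le v$. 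The equality $[e,w_0(K)]=W_K$ follows because any $x\le w_0(K)$ lies in $W_K$ (induction on $\ell(x)$: a right descent of $x$ is a right descent of $w_0(K)$, hence in $K$), while conversely the weak order induced on $W_K$ coincides with the weak order of the Coxeter system $(W_K,K)$, whose top is $w_0(K)$. Combining (i) with the atom description of $[e,w_0(K)]=W_K$ also yields the converse $w_0(K)\le v\Rightarrow K\subseteq D_R(v)$.

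\emph{The recursion.} Finally I would induct on $\ell(v)$. The base case $v=e=w_0(\emptyset)$ gives $\mu_W(e,v)=1=(-1)^0$. For the inductive step put $J:=D_R(v)$ and use $\mu_W(e,v)=-\sum_{e\le x<v}\mu_W(e,x)$. By the inductive hypothesis the nonzero terms are exactly those with $x=w_0(K)$ for some $K$, and by the previous paragraph the elements $x=w_0(K)$ with $x<v$ are precisely those with $K\subsetneq J$ when $v=w_0(J)$, and those with $K\subseteq J$ when $v\neq w_0(J)$ (in which case $J\neq\emptyset$). Since distinct $K$ give distinct $w_0(K)$, this yields
\[
  \mu_W(e,v)=-\sum_{K\subsetneq J}(-1)^{|K|}=(-1)^{|J|}\qquad\text{if }v=w_0(J),
\]
and
\[
  \mu_W(e,v)=-\sum_{K\subseteq J}(-1)^{|K|}=-(1-1)^{|J|}=0\qquad\text{if }v\neq w_0(J),
\]
which, together with the first step, proves the proposition. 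The only ingredient that is not bookkeeping about the weak order is the parabolic decomposition and its compatibility with right descent sets used in step (ii); the main thing to be careful about throughout is the interplay between the \emph{left} weak order and \emph{right} descent sets, i.e.\ keeping every multiplication on the correct side.
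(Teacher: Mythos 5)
The paper does not actually prove this proposition --- it is quoted from Bj\"orner--Brenti [Corollary 3.2.8] --- so there is no in-paper argument to compare against; your proof is a reconstruction, and it is essentially correct. The textbook route (and the one implicit in the paper's own Lemma~\ref{lem:join} and Proposition~\ref{prop:max}) is to apply Rota's crosscut theorem to the atoms of $[e,v]$, which are exactly the right descents of $v$, together with the fact that $\bigvee K=w_0(K)$; you instead run the defining recursion for $\mu$ by induction on $\ell(v)$, which amounts to re-deriving the crosscut computation in this special case. The structural inputs are the same either way: atoms of $[e,v]$ are $D_R(v)$; $w_0(K)\le v$ if and only if $K\subseteq D_R(v)$; $[e,w_0(K)]=W_K$; and distinct $K$ give distinct $w_0(K)$. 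Your reduction to $[e,wu^{-1}]$ is the paper's Proposition~\ref{prop:loweritv}, and the final binomial cancellation is right, including the check that $D_R(v)\neq\emptyset$ when $v\neq e$.

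One local repair is needed. Your justification of $[e,w_0(K)]\subseteq W_K$ (``induction on $\ell(x)$: a right descent of $x$ is a right descent of $w_0(K)$, hence in $K$'') does not go through as stated: knowing that all right descents of $x$ lie in $K$ does not set up an induction, because passing from $x$ to $xs$ for $s\in D_R(x)$ need not stay inside the left-weak-order interval $[e,w_0(K)]$ --- lower intervals in the left weak order are closed under deleting letters from the \emph{left} of a reduced word, not from the right. The correct one-line argument, which the paper itself uses in Lemma~\ref{lem:intdecom}, is that $x\le w_0(K)$ realizes $x$ as a suffix of a reduced word of $w_0(K)$, and every reduced word of $w_0(K)$ uses only letters of $K$. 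With that substitution, and granting the standard facts you invoke (the parabolic factorization $v=v^Kv_K$ with $\ell(v)=\ell(v^K)+\ell(v_K)$ and $D_R(v)\cap K=D_R(v_K)$, and that $D_R(w)=S$ forces $W$ finite with $w=w_0$), the proof is complete.
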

If $u=e$ in Proposition~\ref{prop:mbf}, then $\mu_W(e,w)$ is
nonzero if and only if $w=w_0(J)$ for some $J\subseteq S$. Hence the
characteristic polynomial $\chi_W(q)$ of $W$ can be rewritten as
\begin{equation}\label{eq:chrewrt}
  \chi_W(q)=\sum_{w\in W}\mu_W(e,w)q^{\ell(w_0)-\ell(w)}=\sum_{J\subseteq S} (-1)^{|J|}q^{\ell(w_0)-\ell(w_0(J))}.  
\end{equation}
We note that the M\"{o}bius function \(\mu(u,w)\) for the Bruhat order, which is
another important partial order on a Coxeter group, is given by
\[
  \mu(u,w)=(-1)^{\ell(u)+\ell(w)},
\]
see \cite[Theorem]{Verma1971}.

The following proposition is useful in this paper.

\begin{prop}\cite[Proposition 3.1.6]{BjornerBrenti}
  \label{prop:loweritv}
  If \(u\le w\), then \([u,w]\cong[e,wu^{-1}]\).
\end{prop}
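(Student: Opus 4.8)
The plan is to write down an explicit bijection between the two intervals and check that it is an order isomorphism, reducing everything to elementary identities for the length function. First I would record the reformulation of the weak order that is immediate from the definition: for $x,y\in W$ one has $x\le y$ if and only if $\ell(y)=\ell(x)+\ell(yx^{-1})$. Indeed, if $y=s_k\cdots s_1 x$ with $\ell(s_i\cdots s_1 x)=\ell(x)+i$ for all $i$, then $\ell(y)=\ell(x)+k$; since $yx^{-1}=s_k\cdots s_1$ we have $\ell(yx^{-1})\le k$, while $\ell(y)\le\ell(yx^{-1})+\ell(x)$ gives $k\le\ell(yx^{-1})$, so $\ell(y)=\ell(x)+\ell(yx^{-1})$. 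Conversely a reduced word for $yx^{-1}$ of length $\ell(y)-\ell(x)$ supplies the required expression. I will also use the triangle inequality $\ell(ab)\le\ell(a)+\ell(b)$ freely.

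Next I would define $\phi\colon[u,w]\to[e,wu^{-1}]$ by $\phi(x)=xu^{-1}$ and $\psi\colon[e,wu^{-1}]\to[u,w]$ by $\psi(y)=yu$. As maps of sets, $\psi\circ\phi=\mathrm{id}$ and $\phi\circ\psi=\mathrm{id}$, so everything comes down to showing that (i) $\phi$ and $\psi$ land in the stated intervals, and (ii) both are order preserving. For $\phi$, given $u\le x\le w$, the reformulation turns $u\le x$, $x\le w$, and $u\le w$ into $\ell(x)=\ell(u)+\ell(xu^{-1})$, $\ell(w)=\ell(x)+\ell(wx^{-1})$, and $\ell(w)=\ell(u)+\ell(wu^{-1})$; eliminating $\ell(x)$ and $\ell(w)$ gives $\ell(wu^{-1})=\ell(xu^{-1})+\ell(wx^{-1})=\ell(xu^{-1})+\ell\big((wu^{-1})(xu^{-1})^{-1}\big)$, which is exactly $xu^{-1}\le wu^{-1}$ (and $e\le xu^{-1}$ is automatic). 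Running the same computation with $w$ replaced by any $x'$ with $x\le x'\le w$ shows $\phi$ is order preserving.

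The only step that is not pure symbol-pushing is the well-definedness of $\psi$, and this is where I expect the work to be. Given $e\le y\le wu^{-1}$, I would first prove $\ell(yu)=\ell(y)+\ell(u)$: writing $w=(wu^{-1}y^{-1})(yu)$ and combining the triangle inequality with $\ell(w)=\ell(u)+\ell(wu^{-1})$ and $\ell(wu^{-1})=\ell(y)+\ell(wu^{-1}y^{-1})$ yields $\ell(u)+\ell(y)=\ell(w)-\ell(wu^{-1}y^{-1})\le\ell(yu)\le\ell(y)+\ell(u)$, hence equality. This is the one place where the hypothesis $u\le w$ is genuinely used, rather than just the group structure. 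With this identity in hand, the same length bookkeeping as for $\phi$ shows $u\le yu$ and $yu\le w$, and shows that $y_1\le y_2$ in $[e,wu^{-1}]$ implies $y_1 u\le y_2 u$; this completes the verification that $\phi$ is an order isomorphism. So the proof is routine once the reformulation of $\le$ in terms of $\ell$ is set up, with the single genuine point being the length-additivity identity $\ell(yu)=\ell(y)+\ell(u)$ for $y\in[e,wu^{-1}]$.
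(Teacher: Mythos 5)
Your proof is correct. The paper gives no argument of its own here --- it simply cites Bj\"orner--Brenti, Proposition 3.1.6 --- and your argument is essentially the standard one from that reference: the map $x\mapsto xu^{-1}$, with everything reduced to the characterization $x\le y\iff\ell(y)=\ell(x)+\ell(yx^{-1})$ of the left weak order and the length-additivity $\ell(yu)=\ell(y)+\ell(u)$ for $y\le wu^{-1}$, which you correctly identify as the one nontrivial point. The only spot deserving one more line is the converse direction of your reformulation: given a reduced word $s_k\cdots s_1$ for $yx^{-1}$ with $k=\ell(y)-\ell(x)$, you should note that the intermediate lengths $\ell(s_i\cdots s_1x)=\ell(x)+i$ are forced by the two inequalities $\ell(s_i\cdots s_1x)\le\ell(x)+i$ and $\ell(y)\le(k-i)+\ell(s_i\cdots s_1x)$, so the expression really does witness $x\le y$ in the sense of the definition.
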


\section{The characteristic polynomial of an interval of a Coxeter group}
\label{sec:char-polyn-an}

In this section we show that the characteristic polynomial for 
an interval of a Coxeter group $W$ is decomposed into the product of
the characteristic polynomials of some Coxeter subgroups of $W$.

Recall that since the weak order on a finite Coxeter group is a lattice, we can
define its characteristic polynomial. Even if a Coxeter group is infinite, its
interval can be thought of as a finite graded poset with the bottom and top
elements so the characteristic polynomial of the interval is also defined.

\begin{lem}\label{lem:join}
Let $I, J\subseteq S$ such that $W_I$ and $ W_J$ are finite subgroups of $W$. 
Then we have
\[
w_0(I)\vee w_0(J)=w_0(I\cup J).
\]
\end{lem}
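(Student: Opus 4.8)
The plan is to reduce the identity to a description of which elements of $W$ lie weakly above $w_0(K)$, for a parabolic subgroup $W_K$ that happens to be finite. Throughout I use that $u\le w$ is equivalent to saying $w=vu$ for some $v\in W$ with $\ell(w)=\ell(v)+\ell(u)$ (the intermediate length conditions in the definition of the weak order then follow automatically). The key claim I would establish first is: for $K\subseteq S$ with $W_K$ finite and any $x\in W$, one has $w_0(K)\le x$ if and only if $K\subseteq D_R(x)$.

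Granting this claim, the lemma follows at once. First note that, since $w_0(I\cup J)$ appears in the statement, $W_{I\cup J}$ is finite, so the claim applies to $K\in\{I,J,I\cup J\}$. Since $D_R(w_0(I\cup J))=I\cup J$ contains both $I$ and $J$, the claim gives $w_0(I)\le w_0(I\cup J)$ and $w_0(J)\le w_0(I\cup J)$, so $w_0(I\cup J)$ is an upper bound of $w_0(I)$ and $w_0(J)$. Conversely, if $x$ is any upper bound of $w_0(I)$ and $w_0(J)$, the claim yields $I\subseteq D_R(x)$ and $J\subseteq D_R(x)$, hence $I\cup J\subseteq D_R(x)$, and a second use of the claim gives $w_0(I\cup J)\le x$. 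Thus $w_0(I\cup J)$ is the least upper bound, i.e.\ $w_0(I)\vee w_0(J)=w_0(I\cup J)$.

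It remains to prove the claim. For the forward direction, write $x=v\,w_0(K)$ with $\ell(x)=\ell(v)+\ell(w_0(K))$; for $s\in K$ one has $\ell(w_0(K)s)=\ell(w_0(K))-1$, because the longest element of $W_K$ has every element of $K$ as a right descent, so $\ell(xs)\le\ell(v)+\ell(w_0(K)s)=\ell(x)-1$, and since always $\ell(xs)\ge\ell(x)-1$ this forces $s\in D_R(x)$. For the reverse direction, assume $K\subseteq D_R(x)$ and invoke the length-additive parabolic decomposition \cite[\S2.4]{BjornerBrenti}: $x=x^Kx_K$ with $x^K\in W^K$, $x_K\in W_K$, and $\ell(ab)=\ell(a)+\ell(b)$ whenever $a\in W^K$ and $b\in W_K$. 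I claim $x_K=w_0(K)$; otherwise, using that $w_0(K)$ is the unique element of $W_K$ whose right descent set equals $K$ (which follows from the standard identity $\ell(w_0(K)b)=\ell(w_0(K))-\ell(b)$ for $b\in W_K$), there would be $s\in K$ with $\ell(x_Ks)=\ell(x_K)+1$, whence $\ell(xs)=\ell(x^K(x_Ks))=\ell(x^K)+\ell(x_Ks)=\ell(x)+1$, contradicting $s\in D_R(x)$. So $x_K=w_0(K)$ and $x=x^Kw_0(K)$ with lengths adding, that is, $w_0(K)\le x$.

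The only step I expect to require real care is the reverse direction of the claim: it leans on the length-additive parabolic decomposition $W=W^KW_K$ and on the characterization of $w_0(K)$ as the unique element of $W_K$ with full right descent set; the rest is routine bookkeeping with lengths and descents. I would also flag in the write-up that the hypothesis that $W_I$ and $W_J$ are finite does not by itself force $W_{I\cup J}$ to be finite (affine Coxeter groups furnish counterexamples), so the identity should be read under the implicit assumption that $w_0(I\cup J)$ exists.
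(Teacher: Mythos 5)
Your proof is correct, but it takes a different route from the paper's. The paper's argument is two lines: it invokes the fact (citing Bj\"orner--Brenti, Lemma 3.2.3) that for finite $W_K$ the join of the generators themselves satisfies $\bigvee K=w_0(K)$, and then simply computes $w_0(I)\vee w_0(J)=\bigl(\bigvee I\bigr)\vee\bigl(\bigvee J\bigr)=\bigvee(I\cup J)=w_0(I\cup J)$ by associativity of the join. You instead establish the order-theoretic characterization $w_0(K)\le x \iff K\subseteq D_R(x)$ (itself a standard companion fact in Bj\"orner--Brenti, which you reprove from scratch via the length-additive decomposition $x=x^Kx_K$ and the characterization of $w_0(K)$ as the unique element of $W_K$ with full right descent set), and then verify the least-upper-bound property of $w_0(I\cup J)$ directly. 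Your argument is longer but more self-contained, and it makes transparent \emph{why} the join is $w_0(I\cup J)$: the principal filter above $w_0(K)$ is exactly the descent class $\{x: K\subseteq D_R(x)\}$, so intersecting two such filters corresponds to taking the union of the generating sets. Your closing caveat is also well taken: the finiteness of $W_I$ and $W_J$ does not imply that of $W_{I\cup J}$, so the identity must be read under the implicit assumption that $w_0(I\cup J)$ exists --- a point the paper's statement glosses over as well (though in the paper's applications, e.g.\ Proposition~\ref{prop:max} and Lemma~\ref{lem:intdecom}, the relevant ambient parabolic is always assumed finite, so no harm results).
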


\begin{proof}
Since $W_I$ and $W_J$ are finite, we have $\bigvee I=w_0(I)$ and $\bigvee J=w_0(J)$, 
see \cite[Lemma 3.2.3]{BjornerBrenti}. Hence  
\[
w_0(I)\vee w_0(J)=\left(\bigvee I\right)\vee\left(\bigvee J\right)=\bigvee(I\cup J)=w_0(I\cup J).
\qedhere
\]
\end{proof}

\begin{prop}\label{prop:max}
For $w\in W$, let $\mathcal{M}(w)=\{u\in W:u\le w\mbox{ and } u=w_0(J)\mbox{ for some }J\subseteq S\}$. 
Then, $w_0(D_R(w))$ is the unique maximal element of $\mathcal{M}(w)$.
\end{prop}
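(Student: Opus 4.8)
The plan is to prove the stronger statement that $w_0(D_R(w))$ is the \emph{maximum} of $\mathcal{M}(w)$, from which uniqueness of the maximal element is immediate. The one fact I would isolate first is the following: for a simple reflection $s\in S$,
\[
  s\le w\iff s\in D_R(w).
\]
This is essentially the definition of the weak order. If $s\le w$, then writing $w=s_k\cdots s_1 s$ with the prescribed length increments forces $k=\ell(w)-1$, while $ws=s_k\cdots s_1$ has length at most $k$, so $\ell(ws)<\ell(w)$. Conversely, if $s\in D_R(w)$ then $\ell(ws)=\ell(w)-1$, and concatenating a reduced word for $ws$ with $s$ exhibits $s\le w$: the intermediate lengths start at $\ell(s)=1$, end at $\ell(w)$ after $\ell(w)-1$ steps, and can increase by at most one at each step, so they increase by exactly one. (Equivalently, this is the instance $u=s$ of the standard fact that $u\le w$ iff $\ell(w)=\ell(wu^{-1})+\ell(u)$.)

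First I would show that $w_0(D_R(w))\in\mathcal{M}(w)$. By the equivalence above, every $s\in D_R(w)$ satisfies $s=w_0(\{s\})\le w$, so the finite set $D_R(w)$ is bounded above by $w$ and hence its join exists inside the interval $[e,w]$. Applying Lemma~\ref{lem:join} repeatedly to these generators identifies this join with $w_0(D_R(w))$ — at each stage the intermediate parabolic subgroup is finite precisely because the corresponding join exists — and in particular $W_{D_R(w)}$ is finite and $w_0(D_R(w))\le w$. (Alternatively one may invoke that $\bigvee J=w_0(J)$ whenever this join exists, cf.~\cite[Lemma 3.2.3]{BjornerBrenti}, with $J=D_R(w)$.) Thus $w_0(D_R(w))$ is an element of $\mathcal{M}(w)$.

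Next I would show that $w_0(D_R(w))$ dominates $\mathcal{M}(w)$. Let $u=w_0(J)\in\mathcal{M}(w)$, so $u\le w$ and $W_J$ is finite. Since $w_0(J)$ is the longest element of $W_J$, every $s\in J$ is a right descent of $w_0(J)$, hence $s\le w_0(J)=u\le w$, and so $s\in D_R(w)$ by the first paragraph; therefore $J\subseteq D_R(w)$. Then $W_J$ is a parabolic subgroup of the finite group $W_{D_R(w)}$, whose top element in the weak order is $w_0(D_R(w))$, so $w_0(J)\le w_0(D_R(w))$. Combined with the previous paragraph, $w_0(D_R(w))$ lies in $\mathcal{M}(w)$ and is above every element of $\mathcal{M}(w)$, i.e.\ it is the maximum, and hence the unique maximal element.

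The step I expect to be the main obstacle is the one establishing cleanly that $W_{D_R(w)}$ is finite and $w_0(D_R(w))\le w$. The iterated use of Lemma~\ref{lem:join} must be accompanied by the observation that each intermediate parabolic subgroup is finite — legitimate here only because every join in sight lies in the finite interval $[e,w]$ — or else one must cite the sharper ``$\bigvee J=w_0(J)$ when defined'' statement. Once this point is settled, the remaining steps are short and formal.
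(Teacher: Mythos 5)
Your proof is correct and follows essentially the same route as the paper's: both arguments hinge on the equivalence $s\le w\iff s\in D_R(w)$ together with the identity $\bigvee J=w_0(J)$ (Lemma~\ref{lem:join}, resp.\ \cite[Lemma 3.2.3]{BjornerBrenti}). The only difference is organizational --- the paper first forms $K=\bigcup_{w_0(J)\le w}J$, shows $w_0(K)=\bigvee\mathcal{M}(w)\le w$, and then identifies $K=D_R(w)$, whereas you verify membership and domination directly for the candidate $w_0(D_R(w))$, and you are somewhat more explicit than the paper about why $W_{D_R(w)}$ is finite.
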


\begin{proof}
  Let $K$ be the union of subsets $J\subseteq S$ such that $w_0(J)\le w$, i.e.,
  $K=\bigcup_{w_0(J)\le w} J$. By Lemma~\ref{lem:join} and the definition of
  $\mathcal{M}(w)$, we have
  \begin{equation}\label{eq:w0K}
    w_0(K)=\bigvee\{w_0(J):w_0(J)\le w, J\subseteq S\}=\bigvee \mathcal{M}(w) ,
  \end{equation}
  which means that $w_0(J)\le w_0(K)$ for all $J$ with \( w_0(J)\le w \). By the
  definition of join, we have $w_0(K)=\bigvee_{w_0(J)\le w} w_0(J)\le w$, which
  implies $w_0(K)\in\mathcal{M}(w)$. By \eqref{eq:w0K}, this implies that
  $w_0(K)$ is the unique maximal element of $\mathcal{M}(w)$.

  Now it remains to show that \( K=D_R(w) \). 
  Suppose $s\in K\subseteq S$. 
  Since $s\le w_0(K)\le w$ and by the definition of the weak order, 
  there is a reduced expression of $w$ ending with $s$. 
  This means that $s\in D_R(w)$.
  Conversely, suppose $s\in D_R(w)$. 
  Then, since $s\le w$ and $s=w_0(\{s\})\in\mathcal{M}(w)$, we have $s\in K$. 
  Hence we obtain $K=D_R(w)$, which completes the proof.
\end{proof}

To decompose the characteristic polynomial of an interval of $W$, we first
decompose the interval itself into a product of Coxeter subgroups of $W$ as
follows.

\begin{lem}\label{lem:intdecom}
  Let $K$ be a subset of $S$ such that $W_K$ is finite. 
  Let $K_1,K_2,\dots,K_r$ be the subsets of $K$ such that each $K_i$ is a connected component 
  of the Coxeter graph for $(W_K,K)$ and $\bigcup^r_{i=1} K_i=K$. 
  Then the interval $[e,w_0(K)]$ of $W$ is isomorphic to 
  $W_{K_1}\times W_{K_2}\times\cdots\times W_{K_r}$ as posets.
\end{lem}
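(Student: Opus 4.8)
The plan is to show that, as a poset, $[e,w_0(K)]$ is nothing but the Coxeter group $W_K$ with its own weak order, and then that $W_K$ with its weak order is the direct product of the $W_{K_i}$'s. The basic tool is the reformulation of the left weak order, a routine consequence of the definition: for $u,w$ in a Coxeter group, $u\le w$ if and only if $\ell(w)=\ell(u)+\ell(wu^{-1})$.

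First I would prove that $[e,w_0(K)]=W_K$ and that the two weak orders coincide there. The inclusion $W_K\hookrightarrow W$ is length-preserving, since a reduced word of an element of $W_K$ uses only generators in $K$ (a standard property of the parabolic subgroup $W_K$, as $(W_K,K)$ is itself a Coxeter system), so by the reformulation above the weak order of $W$ restricted to $W_K$ is exactly the weak order of $(W_K,K)$. As $W_K$ is finite, its longest element $w_0(K)$ satisfies $\ell(w_0(K)u)=\ell(w_0(K))-\ell(u)$ for all $u\in W_K$; applying this with $u^{-1}$ in place of $u$ gives $\ell(w_0(K))=\ell(w)+\ell(w_0(K)w^{-1})$, i.e.\ $w\le w_0(K)$, for every $w\in W_K$. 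Conversely, if $w\le w_0(K)$ in $W$, then $\ell(w_0(K))=\ell(w)+\ell(w_0(K)w^{-1})$, so concatenating a reduced word of $w_0(K)w^{-1}$ with one of $w$ gives a reduced word of $w_0(K)\in W_K$, which uses only generators in $K$; hence so does its tail, and $w\in W_K$.

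Next I would exhibit the product decomposition. Generators in distinct components of the Coxeter graph of $(W_K,K)$ commute, so the subgroups $W_{K_1},\dots,W_{K_r}$ commute pairwise elementwise; they generate $W_K$ together, and $W_{K_i}\cap W_{K\setminus K_i}=W_{K_i\cap(K\setminus K_i)}=W_\emptyset=\{e\}$, so $W_K$ is their internal direct product, with $\phi\colon(w_1,\dots,w_r)\mapsto w_1\cdots w_r$ a group isomorphism $W_{K_1}\times\cdots\times W_{K_r}\to W_K$. This $\phi$ is length-additive: from any word over $K$ representing $w=w_1\cdots w_r$, deleting the letters outside $K_i$ gives, via the projection homomorphism $W_K\to W_{K_i}$, a word for $w_i$, and the lengths of these $r$ words add up to the length of the original, whence $\ell(w)\ge\sum_i\ell(w_i)$; the reverse inequality is clear. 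Feeding this into the reformulation of the weak order, for $u=u_1\cdots u_r$ and $w=w_1\cdots w_r$ in $W_K$ the relation $u\le w$ reads $\sum_i\bigl(\ell(u_i)+\ell(w_iu_i^{-1})\bigr)=\sum_i\ell(w_i)$, and as each summand is at least $\ell(w_i)$, this holds exactly when $u_i\le w_i$ in $W_{K_i}$ for every $i$; thus $\phi$ is a poset isomorphism, and combining with the previous paragraph finishes the proof.

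I expect the main obstacle to be the implication $w\le w_0(K)\Rightarrow w\in W_K$ above, which hinges on the fact that every reduced word of an element of $W_K$ uses only generators from $K$: this is standard — it follows from Tits' theorem that any two reduced words are connected by braid-moves, which preserve the set of generators used — but it (and the identity $\ell(w_0(K)u)=\ell(w_0(K))-\ell(u)$) must be cited carefully. Everything else is bookkeeping with the length function.
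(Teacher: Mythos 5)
Your proof is correct, and its overall skeleton matches the paper's: first identify $[e,w_0(K)]$ with $W_K$, then decompose $W_K$ as a product. The difference is in how much you prove versus cite. For the identification $[e,w_0(K)]=W_K$, the paper's converse direction (extract $u$ as a suffix of a reduced word of $w_0(K)$, whose letters all lie in $K$) is essentially your argument; for the forward direction the paper simply invokes maximality of $w_0(K)$ in $W_K$, while you verify it via $\ell(w_0(K)u)=\ell(w_0(K))-\ell(u)$ and the length-additive characterization of the weak order --- in doing so you also check explicitly that the weak order of $W$ restricted to $W_K$ coincides with the intrinsic weak order of $(W_K,K)$, a point the paper leaves implicit. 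For the decomposition, the paper cites Humphreys (Proposition 6.1) for $W_K\cong W_{K_1}\times\cdots\times W_{K_r}$ ``as Coxeter groups, hence as posets,'' whereas you prove the internal direct product structure, the length-additivity of the multiplication map, and the fact that the weak order on a reducible Coxeter group is the product of the weak orders of its components. Your route is longer but self-contained, and it supplies exactly the details (order-compatibility of the parabolic inclusion, product of weak orders) that make the phrase ``hence as posets'' legitimate; the paper's route is shorter at the cost of leaning on the literature.
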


\begin{proof}
  It is well known \cite[Proposition 6.1]{Humphreys1990} that \( W_K \) is
  isomorphic to $W_{K_1}\times W_{K_2}\times\cdots\times W_{K_r}$ as Coxeter
  groups, hence as posets. Thus it suffices to show that $[e,w_0(K)]=W_K$. For
  $u\in W$, let $u\in[e,w_0(K)]$. 
  Since $u\le w_0(K)$, there is a reduced expression \( s_{i_k}\dots s_{i_1} \) of $w_0(K)$ such that \( s_{i_j}\dots s_{i_1}=u \),
  where \( j\le k \). Since every reduced word of $w_0(K)\in W_K$ 
  consists of elements in $K$, we have $u\in W_K$. Conversely, if
  $u\in W_K$ then $u\le w_0(K)$ since $w_0(K)$ is the unique maximal element in
  $W_K$. Thus $u\in[e,w_0(K)]$, which completes the proof.
\end{proof}

The following lemma shows that the characteristic polynomial of the direct
product of posets is the product of the characteristic polynomials of the
posets.

\begin{lem}\label{lem:prod}
  Let \( P \) and \( Q \) be finite graded posets with the bottom and top
  elements. Then
\[
  \chi_{P\times Q}(q)=\chi_P(q)\chi_Q(q).
\]
\end{lem}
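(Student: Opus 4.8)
The plan is to compute everything from the definitions, using two standard structural facts about products of posets. First, recall that if $P$ and $Q$ are finite graded posets with bottom and top elements, then so is $P\times Q$: the bottom is $(\hat 0_P,\hat 0_Q)$, the top is $(\hat 1_P,\hat 1_Q)$, the rank function is $r_{P\times Q}(x,y)=r_P(x)+r_Q(y)$, and hence $r_{P\times Q}(P\times Q)=r_P(P)+r_Q(Q)$. Second, the M\"obius function of a product factors: $\mu_{P\times Q}\big((x,y),(x',y')\big)=\mu_P(x,x')\,\mu_Q(y,y')$ whenever $(x,y)\le(x',y')$. This is \cite[Proposition 3.8.2]{EC1}; alternatively one can prove it directly by checking that the product $\mu_P\cdot\mu_Q$ satisfies the defining recurrence of $\mu_{P\times Q}$, since the interval $[(x,y),(x',y')]$ in $P\times Q$ is $[x,x']\times[y,y']$ and summing a product over a product of index sets factors.

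With these in hand the computation is immediate. Starting from the definition,
\begin{align*}
  \chi_{P\times Q}(q)
  &=\sum_{(x,y)\in P\times Q}\mu_{P\times Q}\big((\hat 0_P,\hat 0_Q),(x,y)\big)\,q^{\,r_{P\times Q}(P\times Q)-r_{P\times Q}(x,y)}\\
  &=\sum_{x\in P}\sum_{y\in Q}\mu_P(\hat 0_P,x)\,\mu_Q(\hat 0_Q,y)\,q^{\,(r_P(P)-r_P(x))+(r_Q(Q)-r_Q(y))}\\
  &=\left(\sum_{x\in P}\mu_P(\hat 0_P,x)\,q^{\,r_P(P)-r_P(x)}\right)\left(\sum_{y\in Q}\mu_Q(\hat 0_Q,y)\,q^{\,r_Q(Q)-r_Q(y)}\right)\\
  &=\chi_P(q)\,\chi_Q(q).
\end{align*}

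There is no real obstacle here; the only point requiring a word of justification is the product formula for the M\"obius function, which I would either cite from \cite{EC1} or dispatch in one line via the recurrence. Everything else is bookkeeping: confirming that $P\times Q$ is graded with the additive rank function, and observing that the double sum splits because both the M\"obius values and the exponents of $q$ separate into an $x$-part and a $y$-part. I would state the two structural facts, then display the four-line computation above, and conclude.
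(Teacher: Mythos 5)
Your proof is correct and follows essentially the same route as the paper: cite the product formula for the M\"obius function from \cite[Proposition 3.8.2]{EC1}, observe that the rank function of $P\times Q$ is additive, and split the double sum. Nothing to add.
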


\begin{proof}
  It is well known \cite[Proposition 3.8.2]{EC1} that $\mu_{P\times
    Q}((s,t),(s',t'))=\mu_P(s,s')\mu_Q(t,t')$ for all $s,s'\in P$ and $t,t'\in
  Q$. It is easy to see that $r_{P\times Q}((s,t))=r_P(s)+r_Q(t)$. Hence,
\begin{align*}
\chi_{P\times Q}(q)&=\sum_{(s,t)\in P\times Q} \mu_{P\times Q}(\hat{0}_{P\times Q},(s,t))q^{r_{P\times Q}(P\times Q)-r_{P\times Q}((s,t))}\\
&=\sum_{s\in P,\ t\in Q} \mu_P(\hat{0}_P,s)\mu_Q(\hat{0}_Q,t)q^{(r_P(P)+r_Q(Q))-(r_P(s)+r_Q(t))}\\
&=\chi_P(q)\chi_Q(q). \qedhere
\end{align*}
\end{proof}

Let $u,w\in W$ with $u\le w$. Then the interval $[u,w]$ is 
a graded poset with the bottom element $u$ and the top element $w$. 
To compute the characteristic polynomial of $[u,w]$, 
we need the rank function of the poset $[u,w]$. 
Since the rank of $[u,w]$ is the length of maximal chains of $[u,w]$, 
it is the same as the difference of the length of $w$ and the length of $u$, i.e., $\ell(w)-\ell(u)$. 
Similarly, the rank of an element $t\in [u,w]$ is $\ell(t)-\ell(u)$. 

For simplicity, we use the notation $\chi_K$ instead of $\chi_{W_K}$ for the
characteristic polynomial of \(W_K\) for $K\subseteq S$.

\begin{thm}\label{thm:chdecom}
  Let $u,w\in W$ with $u\le w$ and let $K=D_R(wu^{-1})$. 
  Suppose that $K_1,K_2,\dots,K_r$ are the subsets of $K$ such that each $K_i$ is a connected component of the Coxeter graph for $(W_K,K)$ and $\bigcup^r_{i=1} K_i=K$. 
  Then, the characteristic polynomial of $[u,w]$ is given by
\[
\chi_{[u,w]}(q)=q^{\ell(wu^{-1})-\ell(w_0(K))}\chi_{K_1}(q)\chi_{K_2}(q)\cdots\chi_{K_r}(q).
\]
\end{thm}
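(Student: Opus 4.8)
The plan is to reduce the general interval $[u,w]$ to the special interval $[e,w_0(K)]$ and then apply the structural results already established. First I would invoke Proposition~\ref{prop:loweritv}, which gives $[u,w]\cong[e,wu^{-1}]$ as posets; this immediately tells us that $\chi_{[u,w]}(q)=\chi_{[e,wu^{-1}]}(q)$, so we may assume $u=e$ and study $[e,w]$ with $K=D_R(w)$. The key observation is that although $w$ itself need not be of the form $w_0(K)$, the M\"obius function $\mu_W(e,t)$ vanishes unless $t=w_0(J)$ for some $J\subseteq S$ (Proposition~\ref{prop:mbf}), and by Proposition~\ref{prop:max} every such $t$ lying below $w$ actually lies below $w_0(K)$. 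Hence the sum defining $\chi_{[e,w]}(q)$ ranges over exactly the same set of elements $t$ (namely $\mathcal{M}(w)=\{w_0(J):w_0(J)\le w_0(K)\}$) as the sum defining $\chi_{[e,w_0(K)]}(q)$, and the ranks $r_{[e,w]}(t)=\ell(t)$ agree too; the only difference is the exponent shift coming from the rank of the top element, which is $\ell(w)$ in the first case and $\ell(w_0(K))$ in the second. Therefore
\[
\chi_{[e,w]}(q)=q^{\ell(w)-\ell(w_0(K))}\chi_{[e,w_0(K)]}(q).
\]

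Next I would compute $\chi_{[e,w_0(K)]}(q)$ using the decomposition results. By Lemma~\ref{lem:intdecom}, the interval $[e,w_0(K)]$ equals $W_K$ and is isomorphic as a poset to $W_{K_1}\times W_{K_2}\times\cdots\times W_{K_r}$, where the $K_i$ are the connected components of the Coxeter graph of $(W_K,K)$. Since each $W_{K_i}$ is a finite Coxeter group (being a parabolic subgroup of the finite group $W_K$), each $\chi_{W_{K_i}}(q)=\chi_{K_i}(q)$ is defined. Applying Lemma~\ref{lem:prod} repeatedly (by induction on $r$) yields
\[
\chi_{[e,w_0(K)]}(q)=\chi_{K_1}(q)\chi_{K_2}(q)\cdots\chi_{K_r}(q).
\]
Combining this with the exponent-shift identity above, and replacing $w$ by $wu^{-1}$ as permitted by the first step, gives exactly the claimed formula
\[
\chi_{[u,w]}(q)=q^{\ell(wu^{-1})-\ell(w_0(K))}\chi_{K_1}(q)\chi_{K_2}(q)\cdots\chi_{K_r}(q).
\]

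The one point requiring genuine care — the main obstacle — is justifying the exponent-shift identity rigorously, i.e., verifying that the set of elements contributing nonzero terms to $\chi_{[e,w]}(q)$ is genuinely identical to that contributing to $\chi_{[e,w_0(K)]}(q)$. For $\chi_{[e,w]}(q)$ one must check that the summation index $t$ ranges over $[e,w]$ but only $t\in\mathcal{M}(w)$ contribute by Proposition~\ref{prop:mbf}, and that $\mathcal{M}(w)\subseteq[e,w_0(K)]$ by Proposition~\ref{prop:max}; conversely every $t=w_0(J)\le w_0(K)$ satisfies $t\le w$ (since $w_0(K)=w_0(D_R(w))\le w$ by Proposition~\ref{prop:max}), so $\mathcal{M}(w)=\mathcal{M}(w_0(K))$. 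One also needs that the M\"obius value $\mu_W(e,t)$ computed inside the interval $[e,w]$ agrees with the one computed inside $[e,w_0(K)]$, which holds because the M\"obius function of a lower interval depends only on that interval, and $[e,w_0(K)]$ is a subinterval of $[e,w]$ containing all of $\mathcal{M}(w)$ together with all elements below them. With those verifications in place the remaining steps are immediate applications of the cited lemmas.
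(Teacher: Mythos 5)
Your proposal is correct and follows essentially the same route as the paper's proof: reduce $[u,w]$ to $[e,wu^{-1}]$ via Proposition~\ref{prop:loweritv}, use Propositions~\ref{prop:mbf} and~\ref{prop:max} to identify the nonzero M\"obius terms with those of $[e,w_0(K)]$ (including the check that the M\"obius values themselves agree on the common lower interval), extract the factor $q^{\ell(wu^{-1})-\ell(w_0(K))}$, and finish with Lemmas~\ref{lem:intdecom} and~\ref{lem:prod}. The point you flag as requiring care is exactly the point the paper also addresses explicitly, and your justification of it is sound.
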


\begin{proof}
  By Proposition~\ref{prop:loweritv}, we have
\begin{equation}\label{eq:1}
\chi_{[u,w]}(q)=\chi_{[e,wu^{-1}]}(q)=\sum_{t\in [e,wu^{-1}]} \mu_{[e,wu^{-1}]}(e,t)q^{\ell(wu^{-1})-\ell(t)}.
\end{equation}
By Proposition~\ref{prop:mbf}, we have $\mu_{[e,wu^{-1}]}(e,t)\ne 0$ if and only if $t=w_0(J)$ for some
$J\subseteq S$. Therefore we can rewrite \eqref{eq:1} as
\begin{equation}
  \label{eq:2}
  \chi_{[u,w]}(q)=\sum_{t\in \mathcal{M}({wu^{-1}})} \mu_{[e,wu^{-1}]}(e,t)q^{\ell(wu^{-1})-\ell(t)},
\end{equation}
where
$\mathcal{M}({wu^{-1}})=\{v\in W:v\le wu^{-1}\mbox{ and } v=w_0(J)\mbox{ for some }J\subseteq S\}$.

Let $K=D_R(wu^{-1})$. By Proposition~\ref{prop:max}, we have
$\mathcal{M}({wu^{-1}})\subseteq [e,w_0(K)]$ and \( w_0(K)\le wu^{-1} \). By
Proposition~\ref{prop:mbf} again, if \( t\in [e,w_0(K)]\setminus \mathcal{M}({wu^{-1}}) \),
then \( \mu_{[e,wu^{-1}]}(e,t)= 0 \). Therefore we can rewrite \eqref{eq:1} as
\begin{equation}
  \label{eq:3}
  \chi_{[u,w]}(q)=\sum_{t\in [e,w_0(K)]} \mu_{[e,wu^{-1}]}(e,t)q^{\ell(wu^{-1})-\ell(t)}.
\end{equation}
Since \( w_0(K)\le wu^{-1} \), we have \( \mu_{[e,wu^{-1}]}(e,t) =
\mu_{[e,w_0(K)]}(e,t) \) for all \( t\in [e,w_0(K)] \).
Therefore the right-hand side of \eqref{eq:3} is equal to
\[
q^{\ell(wu^{-1})-\ell(w_0(K))}\sum_{t\in [e,w_0(K)]} \mu_{[e,w_0(K)]}(e,t)q^{\ell(w_0(K))-\ell(t)}
  =q^{\ell(wu^{-1})-\ell(w_0(K))}\chi_{[e,w_0(K)]}(q).
\]
Finally, by Lemmas~\ref{lem:intdecom} and \ref{lem:prod}, we have
\[
  \chi_{[e,w_0(K)]}(q) = \chi_{K_1}(q)\chi_{K_2}(q)\cdots\chi_{K_r}(q), 
\]
which completes the proof.
\end{proof}

\section{Properties of descent classes}
\label{sec:prop-desc-class}

In this section we show that the descent class $\MD^J_I$ of a Coxeter group $W$
is an interval of $W$ and obtain some properties of the descent classes.

Throughout this section we assume that $(W,S)$ is a finite Coxeter system. Then,
for any $J\subseteq S$, the subgroup $W_J$ is also finite and $w_0(J)$ exists.
Now we show that $\MD^J_I$ is an interval of $W$.

\begin{prop}\label{prop:interval}
For $I\subseteq J\subseteq S$, each descent class $\MD^J_I$ is equal 
to the interval $[w_0(I),w_0w_0(J^c)]$ of $W$, where \( J^c = S\setminus J \).
\end{prop}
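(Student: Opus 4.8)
The plan is to prove the two containments $\MD^J_I\subseteq[w_0(I),w_0w_0(J^c)]$ and $[w_0(I),w_0w_0(J^c)]\subseteq\MD^J_I$ separately, translating each occurrence of the condition $I\subseteq D_R(w)\subseteq J$ into a statement about the weak order via the characterization of descents. Recall from the definition of the weak order that $u\le w$ means $w$ can be reached from $u$ by left-multiplying by simple reflections each of which increases the length; dually, $s\in D_R(w)$ iff $w$ has a reduced word ending in $s$, iff $w s< w$ in length. The key bookkeeping facts I will use are: $w_0(I)$ is the longest element of $W_I$, so $D_R(w_0(I))=I$ and more importantly $w_0(I)\le w$ iff $I\subseteq D_R(w)$ (this is essentially the statement that $w_0(I)=\bigvee I$ from \cite[Lemma 3.2.3]{BjornerBrenti}, already invoked in Lemma~\ref{lem:join}); and the order-reversing anti-automorphism $w\mapsto w_0 w$ of the weak order, which sends $D_R(w)$ to $S\setminus D_R(w)$ — more precisely $D_R(w_0 w)=S\setminus D_R(w)$ since $w_0 w s<w_0 w$ iff $ws>w$.

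With these tools the first containment is quick: if $w\in\MD^J_I$ then $I\subseteq D_R(w)$ gives $w_0(I)\le w$, and $D_R(w)\subseteq J$ means $S\setminus J\subseteq S\setminus D_R(w)=D_R(w_0 w)$, hence $w_0(J^c)=w_0(S\setminus J)\le w_0 w$; applying the anti-automorphism $w_0(\cdot)$ again (it reverses order) yields $w_0 w_0(J^c)\ge w_0(w_0 w)=w$. So $w\in[w_0(I),w_0 w_0(J^c)]$. For the reverse containment, suppose $w_0(I)\le w\le w_0 w_0(J^c)$. From $w_0(I)\le w$ we get $I\subseteq D_R(w)$. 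From $w\le w_0 w_0(J^c)$, applying the order-reversing map $x\mapsto w_0 x$ gives $w_0(J^c)\le w_0 w$, hence $J^c=S\setminus J\subseteq D_R(w_0(J^c))\subseteq D_R(w_0 w)=S\setminus D_R(w)$, which rearranges to $D_R(w)\subseteq J$. Thus $w\in\MD^J_I$.

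The main obstacle — and the step deserving the most care — is justifying the equivalence ``$w_0(I)\le w \iff I\subseteq D_R(w)$'' and the companion fact that $D_R$ intertwines $w_0(\cdot)$-multiplication with complementation, since these are the load-bearing lemmas. The forward direction of the first is immediate (if $w_0(I)\le w$, every right descent of $w_0(I)$, namely all of $I$, is a right descent of $w$ because a reduced word for $w_0(I)$ extends to one for $w$); the reverse direction is exactly the identification $\bigvee I = w_0(I)$ valid because $W_I$ is finite, combined with the fact that $s\le w$ for every $s\in I$ when $I\subseteq D_R(w)$, so $w_0(I)=\bigvee I\le w$. I should also double-check the edge cases $I=\emptyset$ (then $w_0(I)=e$ and $[e,w_0 w_0(J^c)]=W^{J}$ in the notation of the paper, matching $\MD^J_\emptyset$) and $J=S$ (then $J^c=\emptyset$, $w_0(J^c)=e$, and the top is $w_0$, matching $\MD^S_I=\{w:I\subseteq D_R(w)\}$), to confirm the formula degenerates correctly. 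Once the two descent-to-order translations are in hand, the double inclusion is a two-line symmetric argument as sketched above.
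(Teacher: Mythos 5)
Your proof is correct, but it takes a genuinely different route from the paper. The paper disposes of this proposition in two lines by citing Bj\"orner and Wachs \cite{Bjorner1988} for the fact that $\MD^J_I=[w_0(I),w_0^{J^c}]$ with $w_0^{J^c}$ the top element of $W^{J^c}$, and then invoking the identity $w_0=w_0^Jw_0(J)$ to rewrite the top endpoint as $w_0w_0(J^c)$. You instead give a self-contained double-inclusion argument resting on two standard facts: (a) $w_0(I)\le w$ if and only if $I\subseteq D_R(w)$, which as you note is exactly $\bigvee I=w_0(I)$ (the same \cite[Lemma 3.2.3]{BjornerBrenti} the paper uses in Lemma~\ref{lem:join}) combined with $s\le w\iff s\in D_R(w)$ for the left weak order; and (b) $x\mapsto w_0x$ is an order-reversing involution of the left weak order satisfying $D_R(w_0w)=S\setminus D_R(w)$. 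Both facts check out (for (b) one uses $\ell(w_0u)=\ell(w_0)-\ell(u)$ and that conjugation by $w_0$ preserves length), and the finiteness of $W$ assumed throughout Section~\ref{sec:prop-desc-class} guarantees the joins and $w_0$ exist, so there is no gap. What your approach buys is transparency --- it shows directly why the two endpoints encode the conditions $I\subseteq D_R(w)$ and $D_R(w)\subseteq J$ respectively, and it essentially reproves the Bj\"orner--Wachs interval property rather than quoting it --- at the cost of being longer than the paper's citation-based argument. One cosmetic point: writing ``the anti-automorphism $w_0(\cdot)$'' risks confusion with the notation $w_0(J)$ for the longest element of $W_J$; better to say ``left multiplication by $w_0$.''
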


\begin{proof}
  Bj{\"o}rner and Wachs \cite{Bjorner1988} showed that \(\MD^J_I=[w_0(I),w_0^{J^c}]\), where \(w_0^{J^c}\) is the top element of \(W^{J^c}\).
By the relation $w_0=w_0^Jw_0(J)$ in \cite[p.44]{BjornerBrenti}, we are done.
\end{proof}

Note that Theorem~\ref{thm:chdecom} implies that the characteristic polynomial
of an interval $[u,w]$ is determined by $D_R(wu^{-1})$. Therefore the
characteristic polynomial of the descent class $\MD^J_I=[w_0(I),w_0w_0(J^c)]$ is
determined by $D_R(w_0w_0(J^c)w_0(I))$. (Note that $(w_0(I))^{-1}=w_0(I)$.) We
will find a simple description for $D_R(w_0w_0(J^c)w_0(I))$. To do this, we need
the following lemma.

\begin{lem}\label{lem:ndes}
  Let $u=u_t\dots u_1$ and $v=v_r\dots v_1$ be reduced expressions of $u,v\in
  W$ such that \( u_i\ne v_j \) for all \( 1\le i\le t \) and \( 1\le j\le r \).
  Then, for each $v_k\in D_R(v)$, there exists $u_i$ such that $v_ku_i\neq u_iv_k$ if and
  only if $v_k\notin D_R(vu)$.
\end{lem}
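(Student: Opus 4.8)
The plan is to analyze how right-multiplying $v$ by $u$ affects whether $v_k$ remains a right descent, using the characterization that $v_k \in D_R(x)$ if and only if $x$ has a reduced expression ending in $v_k$, equivalently $\ell(xv_k) < \ell(x)$. Since $u$ and $v$ share no common letters in the given reduced expressions, the word $vu = v_r\cdots v_1 u_t\cdots u_1$ is a concatenation, though not necessarily reduced; I would first argue that because $u$ and $v$ lie in a ``disjoint'' generating situation it is in fact reduced, or at least control its length via $\ell(vu)$ relative to $\ell(v)+\ell(u)$. The key point to isolate is that whether $v_k$ can be ``slid to the right'' past all of $u$ — so that $vu$ has a reduced expression ending in $v_k$ — depends precisely on whether $v_k$ commutes with every $u_i$.

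More concretely, here is the order of steps I would carry out. First, assume $v_k \in D_R(v)$ and suppose $v_k$ commutes with every $u_i$. Then in $vu$ we can move $v_k$ (taken from its occurrence as a right descent of $v$, so $v$ has a reduced word $v' v_k$ with $\ell(v')=\ell(v)-1$) rightward through all of $u_t\cdots u_1$, obtaining an expression $v' u v_k$; this shows $\ell(vu v_k) = \ell(v' u) \le \ell(v')+\ell(u) < \ell(v)+\ell(u)$, and combined with a lower bound $\ell(vu) \ge \ell(v)+\ell(u) - (\text{something})$ one concludes $\ell(vuv_k) < \ell(vu)$, i.e. $v_k \in D_R(vu)$. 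For the reverse, suppose some $u_i$ does not commute with $v_k$; I would show $v_k \notin D_R(vu)$ by a reduced-word/exchange-condition argument: any reduced word for $vu$ that ended in $v_k$ would, after using that $u$ and $v$ have no common letters, force a braid relation or an application of the Strong Exchange Condition that produces a letter of $v$ inside a reduced word for $u$ (or vice versa), contradicting disjointness. This is where I expect the main obstacle to lie: making the ``$v_k$ cannot be brought to the end'' direction rigorous, since one must handle the possibility that $vu$ is not reduced and that interactions happen through non-commuting but non-identical generators. The cleanest tool is likely the Exchange Condition applied to the word $v_r\cdots v_1 u_t\cdots u_1 v_k$: if this is non-reduced one can delete a letter, and a careful case analysis of where the deleted letter sits (in the $v$-part, the $u$-part, or being the final $v_k$) — using $D_R(v)$, disjointness of letters, and the deletion condition — should pin down exactly when a reduction ending in $v_k$ is available.

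Throughout, the combinatorial heart is the equivalence ``$v_k \in D_R(vu)$ $\iff$ $vu$ admits a reduced word ending in $v_k$'' together with the fact that braid-moves connect all reduced words (cited as Tits' theorem in the excerpt); I would also lean on Proposition~\ref{prop:loweritv}-style manipulations only if convenient, but the argument should be self-contained at the level of reduced words and the Exchange Condition. A useful intermediate claim to state and prove first is that under the disjointness hypothesis, $\ell(vu) = \ell(v) + \ell(u) - c$ where $c$ counts, in some precise sense, ``cancellations,'' and that no such cancellation can involve the generator $v_k$ unless $v_k$ fails to commute with some $u_i$ — once this is established, both directions of the stated biconditional follow by comparing $\ell(vu)$ with $\ell(vuv_k)$. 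I would present the ``if'' direction (commuting $\Rightarrow$ stays a descent) first since it is the shorter sliding argument, then devote the bulk of the proof to the contrapositive of the ``only if'' direction.
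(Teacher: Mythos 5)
Your ``if'' direction is essentially the paper's argument and is fine once you pin down the length additivity you hedge on: since $D_R(v)$ is contained in the support $J_1$ of $v$, which is disjoint from the support $J_2$ of $u$, the element $v$ is a minimal length coset representative for $W_{J_2}$, so $\ell(vu)=\ell(v)+\ell(u)$ exactly; then sliding $v_k$ past the commuting letters $u_t\cdots u_1$ produces a reduced word of $vu$ ending in $v_k$, and there is no need for any ``$\ell(vu)\ge\ell(v)+\ell(u)-(\text{something})$'' bookkeeping.

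The genuine gap is in the ``only if'' direction, exactly where you predicted trouble. Carry out your Exchange Condition plan: if $v_k\in D_R(vu)$, one letter $x_j$ of the reduced word $v_r\cdots v_1u_t\cdots u_1$ is deleted. If $x_j$ is some $u_m$, then $v_k=(u_1\cdots u_{m-1})u_m(u_{m-1}\cdots u_1)\in W_{J_2}$, contradicting $W_{J_2}\cap S=J_2$ and disjointness --- that case is fine. But if $x_j$ is some $v_m$, all you obtain is $uv_ku^{-1}\in W_{J_1}$, and since $uv_ku^{-1}$ also lies in $W_{J_2\cup\{v_k\}}$ and $W_I\cap W_J=W_{I\cap J}$, this only yields $uv_ku^{-1}=v_k$, i.e.\ the \emph{element} $u$ commutes with $v_k$. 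That is strictly weaker than the required conclusion that every \emph{letter} $u_i$ commutes with $v_k$, and the missing implication (``$u$ centralizes $v_k$ and $v_k\notin J_2$ implies every letter of $u$ commutes with $v_k$'') is essentially equivalent to the hard direction of the lemma itself, so the Exchange Condition has not reduced the difficulty. The paper closes this step with a different tool: by Tits' theorem all reduced words of $vu$ are connected by braid moves; a braid move $\overbrace{ss's\cdots}^{m}=\overbrace{s'ss'\cdots}^{m}$ with $s=v_k$, $s'=u_i$, $m\ge3$ can never be applied while every occurrence of $s$ lies to the left of every occurrence of $s'$, and the support of a reduced word is also a braid-move invariant, so no reduced word of $vu$ can end in $v_k$. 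You would need either to adopt that relative-order invariant or to supply an independent proof of the letterwise-commutation step to complete your route.
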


\begin{proof}
  Let $v_k\in D_R(v)$. Then there is a reduced expression \( v'_r \dots v'_1 \)
  of \( v \) ending with \( v'_1=v_k \).

  For the ``if'' part, suppose that $v_k$ commutes with every $u_i$. Then \(v'_r\dots v'_2
  u_t\dots u_1 v'_1\) is a reduced expression of \( vu \) ending with \( v_k
  \), which implies $v_k\in D_R(vu)$.

  For the ``only if'' part, suppose that there exists $u_i$ that do not commute
  with $v_k$. For a contradiction suppose that \(v_k\in D_R(vu)\). Then there is
  a reduced expression \( w_{r+t}\dots w_1 \) of \(vu\) ending with \(w_1=v_k\).
  Since any two reduced expressions can be obtained from each other via a
  sequence of braid-moves, we can find a sequence \( R_1,\dots, R_p \) of
  reduced expressions of \( vu \) such that \( R_1=v_r\dots v_1u_t\dots u_1
  \), \( R_p=w_{r+t}\dots w_1 \), and each \( R_{j+1} \) is obtained from \( R_j
  \) by applying a single braid-move.

  Let \( s=v_k \) and \( s'=u_i \). Observe that every simple reflection \( s \)
  is to the left of every simple reflection \( s' \) in \( R_1 \). 
  Therefore we can find the smallest integer \( d \) such
  that \( s \) is to the left (resp.~right) of \( s' \) in \( R_d \) (resp.~\(
  R_{d+1} \)). This can only happen if the braid-move
  $\overbrace{ss'ss'\cdots}^{m}=\overbrace{s'ss's\cdots}^{m}$, for some integer
  $m\ge 3$, is used when we obtain \( R_{d+1} \) from \( R_d \). However, since
  \( m\ge3 \), we can apply this braid-move only if there is already at least
  one \( s \) to the right of \( s' \), which is a contradiction because every \(
  s \) is to the left of each \( s' \) in \( R_d \).
  Therefore every \( u_i \) must commute with \( v_k \), which completes the proof.
\end{proof}

Now we give a simple expression for the set $D_R(w_0w_0(J^c)w_0(I))$.

\begin{lem}\label{lem:iplus}
Let $I\subseteq J\subseteq S$ and $I^+=\{v\in S:iv\neq vi\mbox{ for some }i\in I\}$. 
Then we have $D_R(w_0w_0(J^c)w_0(I))=(J\cup I^+)\setminus I$.
\end{lem}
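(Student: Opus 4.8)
The plan is to analyze the reduced expression for $w_0 w_0(J^c) w_0(I)$ by peeling off the factor $w_0(I)$ and using the descent information of $w_0 w_0(J^c)$ together with Lemma~\ref{lem:ndes}. First I would identify the right descent set of $w_0 w_0(J^c)$. Writing $w_0 = w_0^{J^c} w_0(J^c)$ (the relation from \cite[p.44]{BjornerBrenti} used in Proposition~\ref{prop:interval}), we get $w_0 w_0(J^c) = w_0^{J^c}$, the top element of $W^{J^c} = \mathcal{D}^J_\emptyset = \{w : D_R(w) \subseteq J\}$. So $D_R(w_0 w_0(J^c)) \subseteq J$; in fact, since $w_0^{J^c}$ is the maximum of $\{w : D_R(w)\subseteq J\}$ and, by Proposition~\ref{prop:interval} applied with $I = J$, this coset maximum equals $w_0 w_0(J^c) w_0$ in the reversed sense — more directly, $w_0^{J^c}$ has every element of $J$ as a right descent (otherwise one could multiply on the right by some $s\in J$ and stay in $W^{J^c}$, contradicting maximality), so $D_R(w_0 w_0(J^c)) = J$.

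Next I would take a reduced expression $v_r\cdots v_1$ for $v := w_0 w_0(J^c)$ and a reduced expression $u_t\cdots u_1$ for $u := w_0(I)$, chosen so that the hypothesis of Lemma~\ref{lem:ndes} holds, namely $u_i \ne v_j$ for all $i,j$. This requires care: I would argue that $w_0(I)$ has a reduced word using only letters of $I$, and that among the letters of $J = D_R(v)$ one can arrange, via braid-moves, for the letters of $I\subseteq J$ to be absent from the chosen reduced word of $v$ — but wait, that is generally false since $I\subseteq J = D_R(v)$. The right move is instead to write $v = w_0 w_0(J^c)$ and $vu = w_0 w_0(J^c) w_0(I)$, and observe $\ell(vu) = \ell(v) + \ell(u)$ because $vu = w_0 w_0(J^c) w_0(I) = w_0 w_0(I\cup(J^c)\setminus\cdots)$... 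Let me instead use: $w_0(J^c) w_0(I)$ — since $I\subseteq J$, i.e.\ $I \cap J^c = \emptyset$, the parabolic subgroups $W_{J^c}$ and $W_I$ sit in ``general position'' and $\ell(w_0(J^c)w_0(I)) = \ell(w_0(J^c)) + \ell(w_0(I))$. Multiplying on the left by $w_0$ reverses length, so $\ell(w_0 w_0(J^c) w_0(I)) = \ell(w_0) - \ell(w_0(J^c)) - \ell(w_0(I)) = \ell(w_0 w_0(J^c)) - \ell(w_0(I))$, confirming $\ell(vu) = \ell(v) - \ell(u)$, i.e.\ $u \le v$ in the weak order. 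Then $vu^{-1}$ in Lemma~\ref{lem:ndes}'s notation should be replaced: I want $D_R(w_0 w_0(J^c) w_0(I))$ directly.

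With $v \ge u$ established, I would apply Lemma~\ref{lem:ndes} with the roles set so that we compute $D_R$ of the product. Concretely: take a reduced word for $v$ ending in a reduced word for $u$ (possible since $u \le v$ in weak order means $v = v' u$ with $\ell(v) = \ell(v') + \ell(u)$, where $v' = w_0 w_0(J^c) w_0(I)$ is exactly our target). So I want $D_R(v')$ where $v = v' u$ and $D_R(v) = J$, $u = w_0(I)$. A right descent $s$ of $v$ survives as a right descent of $v'$ unless it gets ``cancelled'' or ``absorbed'' into the $u = w_0(I)$ part. The elements of $I$ are precisely absorbed (they are exactly the right descents of $w_0(I)$, read off the right end), so $I \not\subseteq D_R(v')$, giving the ``$\setminus I$''. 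The elements of $J\setminus I$ that commute with all of $I$ remain right descents of $v'$; those that fail to commute with some $i\in I$ get pushed past and cease to be right descents — this is exactly Lemma~\ref{lem:ndes}, yielding that $(J\setminus I)\cap D_R(v') = (J\setminus I)\setminus I^+ = (J\setminus I) \cap (I^+)^c$. Finally, new right descents can be created: multiplying by $w_0(I)$ on the right, letters of $I^+ \setminus I$ that were not descents of $v$ but anticommute with $I$ become right descents of $v'$. Assembling: $D_R(v') = \big((J\setminus I)\setminus I^+\big) \cup \big(I^+\setminus I\big) = (J\cup I^+)\setminus I$, using $I \subseteq J$ so $(J\setminus I)\setminus I^+ = (J\setminus I^+)\setminus I$ and $(J\setminus I^+)\cup(I^+\setminus I) = (J\cup I^+)\setminus I$. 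The main obstacle is making the ``absorbed / created / pushed-past'' trichotomy rigorous; I expect to handle it by a careful double inclusion, using Lemma~\ref{lem:ndes} for one direction and for the reverse direction exhibiting explicit reduced words (ending in the appropriate generator) obtained by braid-moves, exactly as in the proof of Lemma~\ref{lem:ndes} itself.
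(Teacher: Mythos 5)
There is a genuine gap at the heart of your argument. You correctly observe that $w_0(I)\le w_0w_0(J^c)$, set $v'=w_0w_0(J^c)w_0(I)$ so that $w_0w_0(J^c)=v'\,w_0(I)$ with lengths adding, and state the correct ``absorbed / survive / created'' description of $D_R(v')$. But the tool you invoke to justify it, Lemma~\ref{lem:ndes}, requires that the two factors have reduced words over \emph{disjoint} sets of generators ($u_i\ne v_j$ for all $i,j$), and the pair $(v',\,w_0(I))$ violates this: $v'$ generally uses letters of $I$. For instance, in $A_2$ with $I=\{s_1\}$, $J=S$, one has $v'=w_0w_0(I)=s_1s_2s_1\cdot s_1=s_1s_2$, whose support meets $I$. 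Moreover, Lemma~\ref{lem:ndes} predicts descents of the \emph{product} from descents of the left factor, whereas you need to go the other way (from $D_R(v'w_0(I))=J$ back to $D_R(v')$). You acknowledge that making the trichotomy rigorous is ``the main obstacle,'' but the plan you give for overcoming it is exactly the step that fails; you also never address why no generator outside $J\cup I^+$ can be a descent of $v'$.

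The missing idea is to stop factoring $v'$ and instead use Proposition~\ref{prop:loweritv}: the map $x\mapsto xw_0(I)$ is an isomorphism $[e,v']\cong[w_0(I),w_0w_0(J^c)]=\MD^J_I$. Since $D_R(v')$ is exactly the set of generators $s$ with $s\le v'$ in the weak order, this converts the problem into deciding, for each single generator $s\notin I$, whether $sw_0(I)\in\MD^J_I$, i.e.\ whether $I\subseteq D_R(sw_0(I))\subseteq J$. For the pair $(s,\,w_0(I))$ with $s\notin I$ the disjoint-support hypothesis of Lemma~\ref{lem:ndes} \emph{is} satisfied, so $s\in D_R(sw_0(I))$ if and only if $s$ commutes with every element of $I$; the two cases $D_R(sw_0(I))=I$ and $D_R(sw_0(I))=I\cup\{s\}$ then yield both inclusions of $D_R(v')=(J\cup I^+)\setminus I$ cleanly. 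This reformulation is what your proposal is missing.
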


\begin{proof}
  The set \(D_R(w_0w_0(J^c)w_0(I))\) is a subset of $[e,w_0w_0(J^c)w_0(I)]$. By
  Proposition~\ref{prop:loweritv}, the interval \([e,w_0w_0(J^c)w_0(I)]\) is
  isomorphic to \(\MD^J_I=[w_0(I),w_0w_0(J^c)]\) via $x\mapsto xw_0(I)$ for
  \(x\in [e,w_0w_0(J^c)w_0(I)]\). Since \(D_R(w_0w_0(J^c)w_0(I))=\{v\in S: e\le
  v\le w_0w_0(J^c)w_0(I)\}\), by the isomorphism \(D_R(w_0w_0(J^c)w_0(I))\) can
  also be written as \(\{v\in S:w_0(I)\le vw_0(I)\le w_0w_0(J^c)\}\), say \(K\).
  We claim that $K=(J\cup I^+)\setminus I$.

  Let $v\in K$. Since $w_0(I)\le vw_0(I)$, we have $v\notin I$. Since
  $vw_0(I)\in\MD^J_I$, we have $I\subseteq D_R(vw_0(I))\subseteq J$. Note that
  $D_R(w_0(I))=I$ by the maximality of $w_0(I)$. So $D_R(vw_0(I))$ is $I$ or
  $I\cup \{v\}$. In the case of $D_R(vw_0(I))=I$, in other words $v\notin
  D_R(vw_0(I))$, there exists $i\in I$ such that $iv\neq vi$ by
  Lemma~\ref{lem:ndes}. Hence, in this case, we have $v\in I^+$. For the other
  case $D_R(vw_0(I))=I\cup \{v\}$, we have $v\in J$ since $I\cup
  \{v\}=D_R(vw_0(I))\subseteq J$. So we have $v\in (J\cup I^+)\setminus I$.

Conversely, let $t\in (J\cup I^+)\setminus I$ and consider $tw_0(I)$. 
Since $t\notin I$, we have $w_0(I)\le tw_0(I)$. If $t\in J$, then 
$D_R(tw_0(I))$ is $I$ or $I\cup \{t\}$ so $I\subseteq D_R(tw_0(I))\subseteq J$. 
If $t\in I^+$, then $t\notin D_R(tw_0(I))$ by Lemma~\ref{lem:ndes}. 
In other words $D_R(tw_0(I))=I$. 
Hence we have $tw_0(I)\in\MD^J_I$, which means that $t\in K$.
\end{proof}

Lemma~\ref{lem:iplus} will be used in later to compute the characteristic
polynomial of $\MD_I$ for the Coxeter group $A_n$.

\section{Permutations with a fixed descent set}
\label{sec:perm-with-fixed}

In this section we give an explicit formula for the characteristic polynomial of
the set of permutations with a fixed descent set ordered by the weak order. As a
corollary we obtain a simple formula for the characteristic polynomial of the
poset of alternating permutations.

An important example of Coxeter groups is the finite irreducible Coxeter group
$A_{n-1}$, which can be identified with the symmetric group as follows. For a
positive integer $n$, let $[n]:=\{1,2,\dots,n\}$. The symmetric group $\SG_n$ is
the set of all bijections from $[n]$ to $[n]$. Each element $\pi\in\SG_n$ is
called a \emph{permutation} and we write $\pi=\pi_1\pi_2\dots\pi_n$ where
$\pi_i=\pi (i)$. The \emph{simple transposition} $s_i$ is the permutation that
exchanges the integers $i$ and $i+1$ and fixes all the other integers. Let $S$
be the set of simple transpositions. Then $(\SG_n,S)$ is the Coxeter group
$A_{n-1}$, see \cite[Proposition 1.5.4]{BjornerBrenti}.
From now on we identify each Coxeter generator \(s_i\) and its index \(i\).

\begin{figure}
  \centering
  \includegraphics[scale=0.8]{./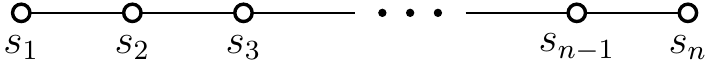}
  \caption{The Coxeter graph of $A_n$.}
  \label{fig:coxgraph}
\end{figure}

The Coxeter graph of \( A_n \) is shown in Figure~\ref{fig:coxgraph}. Note
that a subgraph of the Coxeter graph of \( A_n \) is connected if and only
if the vertices of the subgraph are consecutive integers. Note also that
$s_is_j\neq s_js_i$ if and only if $|i-j|=1$, i.e., $s_i$ and $s_j$ are not
commutative if and only if they are adjacent.

For the Coxeter group $A_n$, we have an explicit formula for the characteristic
polynomial of $\MD_I$ for $I\subseteq S$. 

\begin{thm}\label{thm:fxch}
  For $(A_n,S)$ and $I\subseteq S$, let $d$ be the rank of $\MD_I$. Suppose that
  $S\setminus I=M_1\cup M_2\cup\cdots\cup M_k$ where each $M_i$ is a maximal set
  of consecutive integers. Let $\alpha$, $\beta$ and $\gamma$ be the numbers
  of $i$'s such that $|M_i|=1$, $|M_i|=2$ and $|M_i|\ge3$, respectively. Then,
  the characteristic polynomial of $\MD_I$ is
\[
\chi_{\MD_I}(q)=q^{d-\alpha-2\gamma-3\beta}(q-1)^{\alpha+2\gamma+\beta}(q^2-q-1)^{\beta}.
\]
\end{thm}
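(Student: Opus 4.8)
The plan is to apply Theorem~\ref{thm:chdecom} to the interval $\MD_I = [w_0(I), w_0 w_0(I^c)]$ given by Proposition~\ref{prop:interval} (with $J = I$, so $J^c = I^c = S \setminus I$). By that theorem, the characteristic polynomial is $q^{d - \ell(w_0(K))}\prod_i \chi_{K_i}(q)$, where $K = D_R(w_0 w_0(I^c) w_0(I))$ and the $K_i$ are the connected components of the Coxeter graph restricted to $K$. By Lemma~\ref{lem:iplus} with $J = I$, we have $K = (I \cup I^+) \setminus I = I^+ \setminus I$, where $I^+ = \{v \in S : iv \ne vi \text{ for some } i \in I\}$. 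So the first concrete step is to identify $K$ explicitly from the combinatorial data $S \setminus I = M_1 \cup \cdots \cup M_k$.

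Next I would determine the structure of $K$ in terms of the blocks $M_i$. Since $s_a$ and $s_b$ fail to commute in $A_n$ exactly when $|a - b| = 1$, an element $v = s_a \in S \setminus I$ lies in $I^+$ precisely when $v$ is adjacent to some element of $I$, i.e.\ when $a-1 \in I$ or $a+1 \in I$. For a maximal block $M_i$ of consecutive integers in $S \setminus I$, this means: if $|M_i| = 1$, its unique element is adjacent to $I$ (on at least one side, since $M_i$ is a maximal block in $S\setminus I$ and $I \ne \emptyset$—one must also handle the degenerate cases where $M_i$ sits at an end of the diagram, which I would check separately), so $M_i \subseteq K$ contributes a single isolated node, hence a factor $\chi_{K_i} = \chi_{A_1} = q - 1$ and $\ell(w_0(K_i)) = 1$; if $|M_i| = 2$, both endpoints are adjacent to $I$, so $M_i \cap K = M_i$ is a type-$A_2$ component, contributing $\chi_{A_2} = (q-1)(q-2) = q^2 - 3q + 2$... — wait, I need the right normalization. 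Let me instead record $\chi_{A_1} = q - 1$, and for $A_2$ one computes via \eqref{eq:chrewrt} that $\chi_{A_2}(q) = q^{\ell(w_0)} - 2q^{\ell(w_0) - 1} + q^0 \cdot (\text{terms})$; the stated answer suggests the $|M_i|=2$ blocks each yield a factor $(q-1)(q^2-q-1)$ together with the $q$-power bookkeeping, so I would verify $\chi_{A_2}(q)$ directly and match exponents. For $|M_i| \ge 3$, only the two endpoints of $M_i$ are adjacent to $I$ while the interior nodes are not, so $M_i \cap K$ consists of exactly two isolated nodes, each an $A_1$ component, contributing $(q-1)^2$ and $\ell = 2$.

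Assembling: $\ell(w_0(K)) = \sum_i \ell(w_0(K_i))$ and $\prod_i \chi_{K_i}(q)$ factor as a product over the blocks. Blocks with $|M_i| = 1$ contribute $(q-1)$ and length $1$; with $|M_i| = 2$ contribute $\chi_{A_2}(q)$ and length $3 = \ell(w_0(A_2))$; with $|M_i| \ge 3$ contribute $(q-1)^2$ and length $2$. This gives $\chi_{\MD_I}(q) = q^{d - \alpha - 3\beta - 2\gamma}(q-1)^\alpha \chi_{A_2}(q)^\beta (q-1)^{2\gamma}$, and the target formula then forces the identity $\chi_{A_2}(q) = (q-1)(q^2 - q - 1)$, which I would confirm from \eqref{eq:chrewrt} by enumerating the four subsets of the two generators of $A_2$ and using $\ell(w_0) = 3$.

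The main obstacle I anticipate is the boundary-case analysis in the second step: correctly checking that the endpoints of every block $M_i$ really are adjacent to $I$, including when $M_i$ abuts the left or right end of the Coxeter diagram of $A_n$, and handling the edge case $I = \emptyset$ (where $\MD_\emptyset = \{e\}$ is trivial and the formula must still hold with $d = 0$, $k = 1$, $|M_1| = n$). Everything else is a routine bookkeeping of exponents once the component structure of $K$ is pinned down. I would also double-check the value of $\chi_{A_2}(q)$, since the appearance of the irreducible quadratic $q^2 - q - 1$ is the one genuinely non-obvious feature of the formula.
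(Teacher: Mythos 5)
Your proposal is essentially the paper's own proof: Proposition~\ref{prop:interval} and Lemma~\ref{lem:iplus} identify $K=D_R(w_0w_0(I^c)w_0(I))=I^+\setminus I$, the block-by-block analysis of $S\setminus I$ yields $(\alpha+2\gamma)$ components of type $A_1$ and $\beta$ of type $A_2$, and Theorem~\ref{thm:chdecom} assembles the product, with $\chi_{A_2}(q)=q^3-2q^2+1=(q-1)(q^2-q-1)$ computed from \eqref{eq:chrewrt} exactly as you propose. The one respect in which you are more careful than the paper is the boundary check you defer, and it is not a formality: if a block $M_i$ with $|M_i|\ge2$ contains $1$ or $n$, its outer endpoint has no neighbor in $I$ and hence does not lie in $I^+\setminus I$, so the component structure of $K$ differs from the generic description. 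For instance, in $A_3$ with $I=\{s_3\}$ one has $M_1=\{1,2\}$ and $\beta=1$, but $K=\{s_2\}$ is a single $A_1$; the descent class $\MD_{\{s_3\}}=\{1243,1342,2341\}$ is a three-element chain with characteristic polynomial $q(q-1)$, whereas the stated formula would give $q^{-1}(q-1)(q^2-q-1)$. Size-one boundary blocks are harmless whenever $I\ne\emptyset$ (maximality forces their unique element to have a neighbor in $I$), which is why the alternating-permutation corollary is unaffected, but both your argument and the paper's silently use the hypothesis that every block of size at least $2$ is flanked by $I$ on both sides; to complete the proof you should either add that hypothesis or redo the component count for blocks touching an end of the Coxeter diagram.
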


\begin{proof}
  By Proposition~\ref{prop:interval}, we have $\MD_I=[w_0(I),w_0w_0(I^c)]$. In
  order to use Theorem~\ref{thm:chdecom}, we investigate the structure of
  $D_R(w_0w_0(I^c)w_0(I))$. By Lemma~\ref{lem:iplus}, we have
  \[
    D_R(w_0w_0(I^c)w_0(I))=(I\cup I^+)\setminus I=I^+\setminus I,
  \]
  where $I^+=\{j\in S:ij\neq ji\mbox{ for some }i\in I\}$. Since
  $I^+=\{j\in S:|i-j|=1\mbox{ for some }i\in I\}$, we have 
\begin{equation}\label{eq:I+}
  I^+\setminus I=\{j\in S\setminus I:|i-j|=1\mbox{ for some }i\in I\}. 
\end{equation}

Let $S\setminus I=M_1\cup M_2\cup\cdots\cup M_k$ as in the statement. By
\eqref{eq:I+}, if $|M_i|=1$ or $|M_i|=2$, every element of $M_i$ is contained in
$I^+\setminus I$, and if $|M_i|\ge3$, only the smallest and largest elements of
$M_i$ are contained in $I^+\setminus I$. Hence every maximal subset of
consecutive integers in $I^+\setminus I$ has one or two elements and the
number of subsets of size \( 1 \) (resp.~\( 2 \)) is $\alpha+2\gamma$
(resp.~$\beta$). Therefore, by Lemma~\ref{lem:intdecom}, we have
\begin{equation}\label{eq:A1A2}
[e,w_0(I^+\setminus I)]\cong (\alpha+2\gamma)A_1\times \beta A_2.
\end{equation} 

Since the rank of $A_1$ is 1 and the rank of $A_2$ is 3, 
the rank of $[e,w_0(I^+\setminus I)]$ is $\alpha+2\gamma+3\beta$. 
Thus, by Theorem~\ref{thm:chdecom} and \eqref{eq:A1A2}, 
\[
  \chi_{\MD_I}(q)=q^{d-(\alpha+2\gamma+3\beta)}\chi_{A_1}(q)^{\alpha+2\gamma}\chi_{A_2}(q)^\beta.
\]
Since $\chi_{A_1}(q)=q-1$ and $\chi_{A_2}(q)=(q-1)(q^2-q-1)$, we obtain the
theorem.
\end{proof}

A permutation $\pi=\pi_1\pi_2\dots\pi_n\in\SG_n$ satisfying
$\pi_1<\pi_2>\pi_3<\cdots$ is called an \emph{alternating permutation}. Let
$\Alt_n$ denote the set of alternating permutations in \( \SG_n \). Then one can
easily check that $\Alt_n=\MD_I$ in the Coxeter system \( (A_{n-1},S) \), where
\( I=\{2,4,6,\dots\}\cap [n-1] \). Observe that \( S\setminus I
=\{1,3,5,\dots\}\cap [n-1] \) in which every maximal subset of consecutive
integers has size \( 1 \). Since $|S\setminus I|=\flr{\frac{n}{2}}$ and the rank
of $\Alt_n$ is $\binom{n-1}{2}$, we obtain the characteristic polynomial of
$\Alt_n$ as a corollary of Theorem~\ref{thm:fxch}.

\begin{cor}
  The characteristic polynomial of $\Alt_n$ is
  \[
  \chi_{\Alt_n}(q)=q^{\binom{n-1}{2}-\flr{\frac{n}{2}}}(q-1)^{\flr{\frac{n}{2}}}.
  \]    
\end{cor}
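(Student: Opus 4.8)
The plan is to realize $\Alt_n$ as a descent class in the Coxeter group $A_{n-1}$ and then specialize Theorem~\ref{thm:fxch}. First I would check the identification $\Alt_n=\MD_I$ in $(A_{n-1},S)$ with $I=\{2,4,6,\dots\}\cap[n-1]$: in the symmetric group $s_i\in D_R(\pi)$ precisely when $\pi_i>\pi_{i+1}$, so the defining chain of inequalities $\pi_1<\pi_2>\pi_3<\cdots$ of an alternating permutation says exactly that the descents of $\pi$ occur at the even positions, that is, $D_R(\pi)=I$. Hence $\Alt_n=\MD_I$ and Theorem~\ref{thm:fxch} applies.

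Next I would read off the parameters of Theorem~\ref{thm:fxch} for this $I$. Its complement is $S\setminus I=\{1,3,5,\dots\}\cap[n-1]$, and since consecutive odd integers differ by $2$, no two elements of $S\setminus I$ are adjacent; thus every maximal block $M_i$ of consecutive integers in $S\setminus I$ is a singleton, so $\beta=\gamma=0$ and $\alpha=|S\setminus I|$. Counting the odd integers in $[n-1]$ gives $\alpha=\flr{n/2}$ for both parities of $n$.

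It then remains to identify the rank $d$ of $\MD_I$. Writing $I^c=S\setminus I$, Proposition~\ref{prop:interval} gives $\MD_I=[w_0(I),w_0w_0(I^c)]$, so $d=\ell(w_0w_0(I^c))-\ell(w_0(I))$. Both $I$ and $I^c$ consist of pairwise non-adjacent generators, so $W_I$ and $W_{I^c}$ are direct products of copies of $A_1$, whence $\ell(w_0(I))=|I|$ and $\ell(w_0(I^c))=|I^c|$. Combining $\ell(w_0)=\binom{n}{2}$ in $A_{n-1}$ with the factorization $w_0=w_0^{I^c}w_0(I^c)$, $\ell(w_0)=\ell(w_0^{I^c})+\ell(w_0(I^c))$, $w_0^{I^c}=w_0w_0(I^c)$ used in the proof of Proposition~\ref{prop:interval}, we get $\ell(w_0w_0(I^c))=\binom{n}{2}-|I^c|$ and hence $d=\binom{n}{2}-|I^c|-|I|=\binom{n}{2}-(n-1)=\binom{n-1}{2}$.

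Finally, substituting $\alpha=\flr{n/2}$, $\beta=\gamma=0$, and $d=\binom{n-1}{2}$ into the formula of Theorem~\ref{thm:fxch} yields $\chi_{\Alt_n}(q)=q^{\binom{n-1}{2}-\flr{n/2}}(q-1)^{\flr{n/2}}$, as claimed. There is no genuine obstacle: the statement is a pure specialization, and the only steps meriting care are the bookkeeping for $d$ and the small check (implicit in invoking Theorem~\ref{thm:fxch}) that the extremal blocks $\{1\}$ and $\{n-1\}$ of $S\setminus I$ really do abut $I$, so that each singleton block genuinely contributes an $A_1$ factor; this I would confirm by inspecting the first few values of $n$.
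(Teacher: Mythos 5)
Your argument is correct and is essentially the paper's own proof: identify $\Alt_n$ with $\MD_I$ for $I=\{2,4,\dots\}\cap[n-1]$, observe that all blocks of $S\setminus I$ are singletons so $\alpha=\flr{n/2}$ and $\beta=\gamma=0$, and specialize Theorem~\ref{thm:fxch}. The only difference is that you supply the computation $d=\binom{n}{2}-|I|-|I^c|=\binom{n-1}{2}$ via $w_0=w_0^{I^c}w_0(I^c)$, whereas the paper simply asserts the rank of $\Alt_n$; your final sanity check that the extremal singleton blocks abut $I$ is also well placed, since it is exactly what requires $n\ge 3$.
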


Stanley \cite{Stanley1972} showed that the characteristic polynomial of a
supersolvable lattice is decomposed into linear factors, see also \cite[Theorem
6.2]{Sagan1999}. Although the characteristic polynomial of \( \Alt_n \) has only
linear factors, one can check that \( \Alt_n \) is not supersolvable.

\section{Modified characteristic polynomials for classical Coxeter groups}
\label{sec:modif-char-polyn}

In this section we find generating functions for modified characteristic
polynomials for the classical Coxeter groups \(A_n\), \(B_n\) and \(D_n\).

\begin{defn}\label{defn:chihat}
Let $W$ be a ranked poset with the bottom element $\hat{0}$. The \emph{modified
  characteristic polynomial} \( \hat{\chi}_W(q) \) of \( W \) is defined by 
\[
\hat{\chi}_W(q)=\sum_{w\in W} \mu_W(\hat{0},w) q^{r_W(w)},
\]
where \(r_W(w)\) is the rank of \(w\in W\).
\end{defn}

If \( W \) is a finite Coxeter group, there is a simple relation between the
modified characteristic polynomial $\hat{\chi}_W(q)$ and the characteristic
polynomial $\chi_W(q)$ as follows.

\begin{prop}\label{prop:chmodch}
Let $(W,S)$ be a finite Coxeter group and $w_0$ the unique maximal element of $W$. Then we have
\[
\hat{\chi}_W(q)=q^{\ell(w_0)}\chi_W(q^{-1}).
\]
\end{prop}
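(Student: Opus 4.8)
The plan is to use the definitions of the two polynomials directly, together with the fact that on a finite Coxeter group under the weak order the rank of an element equals its length, and that the top element $w_0$ has length $\ell(w_0) = r_W(W)$. The key observation is that the characteristic polynomial $\chi_W(q) = \sum_{w\in W} \mu_W(\hat 0, w) q^{r_W(W) - r_W(w)}$ and the modified version $\hat\chi_W(q) = \sum_{w\in W} \mu_W(\hat 0, w) q^{r_W(w)}$ differ only in the exponent of $q$: one uses $r_W(W) - r_W(w)$ and the other uses $r_W(w)$, and these two exponents are related by $r_W(w) = r_W(W) - (r_W(W) - r_W(w))$.

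First I would note that since $(W,S)$ is finite, $w_0$ exists and $r_W(W) = \ell(w_0)$. Then I would compute
\[
q^{\ell(w_0)}\chi_W(q^{-1}) = q^{\ell(w_0)} \sum_{w\in W} \mu_W(\hat 0, w)\, q^{-(\ell(w_0) - r_W(w))} = \sum_{w\in W} \mu_W(\hat 0, w)\, q^{\ell(w_0) - \ell(w_0) + r_W(w)},
\]
which collapses term by term to $\sum_{w\in W} \mu_W(\hat 0, w)\, q^{r_W(w)} = \hat\chi_W(q)$. This is essentially a one-line substitution once the exponents are matched up.

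There is no real obstacle here; the only point requiring a moment of care is to confirm that $\chi_W$ as defined in the paper uses the exponent $r_W(W) - r_W(t)$ (which it does, with $r_W(W) = \ell(w_0)$ and $r_W(t) = \ell(t)$ by the remark that rank equals length on the weak order), so that the substitution $q \mapsto q^{-1}$ followed by multiplication by $q^{\ell(w_0)}$ converts each exponent $\ell(w_0) - \ell(t)$ into $\ell(t)$. I would also remark that $\hat 0 = e$ here, so $\mu_W(\hat 0, w) = \mu_W(e,w)$ agrees in both expressions. The proof is then complete after displaying the chain of equalities above.
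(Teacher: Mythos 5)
Your proof is correct and is exactly the computation the paper leaves implicit when it says the proposition ``follows immediately from the definitions'': substitute $q\mapsto q^{-1}$ in $\chi_W$, multiply by $q^{\ell(w_0)}$, and match exponents using $r_W(W)=\ell(w_0)$ and $r_W(t)=\ell(t)$. Nothing further is needed.
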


\begin{proof}
 This follows immediately from the definitions of \( \hat{\chi}_W(q)
    \) and $\chi_W(q)$.
\end{proof}

In what follows we find an expression for the generating functions for
$\hat{\chi}_{A_n}(q)$, \(\hat{\chi}_{B_n}(q)\), and \(\hat{\chi}_{D_n}(q)\).
First we consider refinements of these generating functions.

\begin{defn}
  For the Coxeter system $(A_n,S)$ (resp.~$(B_n,S)$, $(D_n,S)$), let $A_{n,k,l}$
  (resp.~$B_{n,k,l}$, $D_{n,k,l}$) be the number of subsets $J\subseteq S$ such that
  $|J|=k$ and $\ell(w_0(J))=l$. We define
  \begin{align*}
    T_A(x,y,z) = \sum_{n\ge0}\sum_{k\ge0}\sum_{l\ge0} A_{n,k,l} x^n y^k z^l,\\
    T_B(x,y,z) = \sum_{n\ge0}\sum_{k\ge0}\sum_{l\ge0} B_{n,k,l} x^n y^k z^l,\\
    T_D(x,y,z) = \sum_{n\ge2}\sum_{k\ge0}\sum_{l\ge0} D_{n,k,l} x^n y^k z^l,
  \end{align*}
  where $A_{0,k,l}=B_{0,k,l}=1$ if $k=l=0$ and $A_{0,k,l}=B_{0,k,l}=0$
  otherwise.
\end{defn}

In the definition of $T_D(x,y,z)$ the sum is over \( n\ge2 \) for computational
convenience. Note that $B_1\cong A_1$, $D_2\cong A_1\times A_1$, and
$D_3\cong A_3$. 

Similar to \eqref{eq:chrewrt}, the polynomial \(\hat{\chi}_W(q)\) can be written
as
\begin{equation}\label{eq:chrewrt1}
  \hat{\chi}_W(q)=\sum_{J\subseteq S} (-1)^{|J|}q^{\ell(w_0(J))}. 
\end{equation}
If \(W=A_n\), we have
\[
  \hat{\chi}_{A_n}(q)=\sum_{k\ge0}\sum_{l\ge0} A_{n,k,l} (-1)^k q^l,
\]
which implies that
  \begin{equation}\label{eq:TA}
    T_A(x,-1,q)=\sum_{n\ge 0} \left(\sum_{k\ge0}\sum_{l\ge0} A_{n,k,l} (-1)^k q^l\right) x^n=\sum_{n\ge 0} \hat{\chi}_{A_n}(q) x^n.
  \end{equation}
 By the same arguments we have
 \begin{align}
   \label{eq:TB}
   T_B(x,-1,q)&=\sum_{n\ge 0} \hat{\chi}_{B_n}(q) x^n,\\
   \label{eq:TD}
   T_D(x,-1,q)&=\sum_{n\ge 2} \hat{\chi}_{D_n}(q) x^n.
 \end{align}

 We use the fact that $\ell(w_0(A_n))=\binom{n+1}{2}$, $\ell(w_0(B_n))=n^2$,
and $\ell(w_0(D_n))=n(n-1)$, see~\cite[p.16, p.80 Table 2]{Humphreys1990}. Here the
 notation $w_0(A_n)$ (resp.~$w_0(B_n)$, \(w_0(D_n)\)) means that the unique maximal
 element of $A_n$ (resp.~$B_n$, \(D_n\)).

 Now we give explicit formulas for $T_A(x,y,z)$, $T_B(x,y,z)$, and $T_D(x,y,z)$.

\begin{prop}\label{prop:genfc}
We have
\begin{align}
  \label{eq:T_A}
  & T_A(x,y,z)=\frac{P}{1-xP},\\
  \label{eq:T_B}
  & T_B(x,y,z)=\frac{Q}{1-xP},\\
  \label{eq:T_D}
  & T_D(x,y,z)=\frac{x^2P+2x(P-1)+R}{1-xP},
\end{align}
where
\[
P=\sum_{n\ge0} x^n y^n z^{\binom{n+1}2},\ Q=\sum_{n\ge0} x^n y^n z^{n^2},\mbox{ and } R=\sum_{n\ge2} x^n y^n z^{n^2-n}.
\]
\end{prop}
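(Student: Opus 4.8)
The plan is to count, for each of the three families, the subsets $J\subseteq S$ of a given size weighted by $q^{\ell(w_0(J))}$, by decomposing $J$ into the connected components of the induced Coxeter subgraph and using the multiplicativity $\ell(w_0(J)) = \sum_i \ell(w_0(J_i))$ over components $J_i$ (since $W_J \cong \prod_i W_{J_i}$, by the same fact cited in Lemma~\ref{lem:intdecom}). The first step is to understand the possible connected components. In type $A_n$ the Coxeter graph is a path on $n$ vertices, so a subset $J\subseteq S$ with $|S|=n$ is a disjoint union of subpaths; a subpath on $m$ consecutive vertices is a copy of $A_m$ with $\ell(w_0)=\binom{m+1}{2}$. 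Thus choosing $J$ amounts to choosing a composition of the gaps and blocks along the path, which is exactly the structure encoded by $P=\sum_{m\ge0} x^m y^m z^{\binom{m+1}{2}}$: here $x^m$ tracks the $m$ vertices used by a block, $y^m$ tracks $|J|$, and $z^{\binom{m+1}{2}}$ tracks $\ell(w_0)$.

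Concretely, for type $A$ I would set up a transfer-type / "blocks and gaps" generating function argument: a subset $J$ of the path on $n$ vertices is obtained by reading left to right and alternately recording a (possibly empty) maximal run of vertices in $J$ and then a single vertex not in $J$ acting as a separator, ending with a final run. A maximal run of $m$ vertices in $J$ (with $m\ge0$, where $m=0$ is the empty run) contributes $x^m y^m z^{\binom{m+1}{2}}$, i.e.\ a factor of $P$; each separator vertex contributes $x$ (it uses up one vertex of the path but is not in $J$). If there are $j$ separators there are $j+1$ runs, giving $\sum_{j\ge0} P^{j+1} x^{j} = P/(1-xP)$, which is \eqref{eq:T_A}. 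One subtlety to check carefully is the $n=0$ term and the bookkeeping of whether consecutive runs are forced to be separated by exactly one gap vertex — this is where maximality of the runs is used, and it is the place where an off-by-one error is most likely; verifying the constant term $A_{0,0,0}=1$ against $P/(1-xP)$ is a good sanity check.

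For type $B_n$, the Coxeter graph is again a path on $n$ vertices but with one end edge labeled $4$; an induced connected subgraph is either a copy of $A_m$ (if it avoids the special end node) or a copy of $B_m$ with $\ell(w_0)=m^2$ (if it includes the special end node, which then forces it to be an initial segment of the path). So in the blocks-and-gaps decomposition, the \emph{first} run (the one at the special end) is of type $B$ and contributes a factor $Q=\sum_{m\ge0} x^m y^m z^{m^2}$, while all subsequent runs are of type $A$ and contribute $P$; the same geometric sum over the number of separators then yields $T_B = Q\cdot\sum_{j\ge0} x^j P^j = Q/(1-xP)$, which is \eqref{eq:T_B}. (Again one must treat the empty first run correctly and confirm $B_0$ contributes $1$.)

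Type $D_n$ is the main obstacle, because the $D_n$ diagram is a path on $n-1$ vertices with an extra node attached near one end (a "fork"), so the induced connected subgraphs are more varied: a connected $J$ containing the fork node (and enough of its neighborhood) is a copy of $D_m$ with $\ell(w_0)=m(m-1)$, contributing $R=\sum_{m\ge2} x^m y^m z^{m^2-m}$; but a connected $J$ near that end that does \emph{not} span the fork can instead look like a type-$A$ piece, and one must carefully enumerate the small configurations around the branch node (including the cases where the branch node is isolated in $J$, where $J$ meets only one of the two short legs giving an $A_1$, or only the path giving an $A_m$, etc.). My plan is to split according to what the subset $J$ looks like in the $3$- or $4$-vertex neighborhood of the branch node, write the contribution of that "head" as an explicit polynomial in $x,y,z$, and then append an arbitrary type-$A$ tail via the same $1/(1-xP)$ factor as before. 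The terms $x^2 P$, $2x(P-1)$, and $R$ in the numerator of \eqref{eq:T_D} should correspond exactly to these head cases (roughly: $R$ for heads that are genuine $D_m$-blocks; $x^2P$ and $2x(P-1)$ for the degenerate heads where the fork region breaks into $A$-type pieces or isolated vertices), and verifying that these three head-contributions are exhaustive and mutually exclusive — and checking them against the known small cases $D_2\cong A_1\times A_1$ and $D_3\cong A_3$ noted in the text — is the crux of the argument. Once the combinatorial decomposition is pinned down, assembling the three generating functions is routine algebra.
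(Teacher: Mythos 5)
Your approach is essentially the paper's: decompose $J$ into the connected components of the induced Coxeter subgraph, use $\ell(w_0(J))=\sum_i\ell(w_0(J_i))$, and sum over the resulting blocks-and-separators structure --- the paper phrases this as the one-step recursions $T_A=P+xP\,T_A$, $T_B=Q+xP\,T_B$, and $T_D=R+x^2P+2x(P-1)+xP\,T_D$, which are exactly your geometric sums unfolded. Your type $A$ and $B$ arguments are complete, and your reading of the type-$D$ numerator is the correct one (the terminal fork-end segment contains both, exactly one, or neither of the two fork tips, giving $R$, $2x(P-1)$, and $x^2P$ respectively, with the $m=2$ term of $R$ covering the disconnected $D_2\cong A_1\times A_1$ case); the only piece left to execute is that case analysis, which comes out cleanest if you peel $A$-blocks from the non-fork end so that only the final segment requires special treatment.
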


\begin{proof}
  Recall that \( A_{n,k,l} \) is the number of subsets $J$ of $S=[n]$ satisfying
  $|J|=k$ and $\ell(w_0(J))=l$ in the Coxeter group \( W=A_n \). For such a
  subset \( J \), let $J=J_1\cup \cdots\cup J_m$, where each $J_i$ is a
  connected component of the Coxeter graph of $(W_J,J)$. Then
  $\ell(w_0(J))=\sum_{i=1}^m\ell(w_0(J_i)) =\sum_{i=1}^m \binom{|J_i|+1}{2}$ and
  the contribution of \( J\subseteq[n] \) to \( T_A(x,y,z) \) is
\[
  \wt(n,J):=  x^ny^{|J_1|+\dots+|J_m|}z^{\binom{|J_1|+1}{2} + \cdots + \binom{|J_m|+1}{2}}.
\]
Therefore 
\begin{equation}\label{eq:nJ}
  T_A(x,y,z) = \sum_{(n,J)\in X} \wt(n,J),
\end{equation}
where \( X \) is the set of all pairs \( (n,J) \) of a nonnegative integer \( n
\) and a set \( J\subseteq[n] \).

We will divide the sum in \eqref{eq:nJ} into two cases \( J=[n] \) and \( J\ne[n] \).
First, note that
\begin{equation}\label{eq:J=[n]}
  \sum_{(n,[n])\in X} \wt(n,[n]) = \sum_{n\ge0} x^n y^n z^{\binom{n+1}2} = P.
\end{equation}
Now consider \( (n,J)\in X \) with \( J\ne[n] \). Let \( t \) be the smallest
positive integer not contained in \( J \). Then we have $J=J_1\cup \cdots\cup
J_m$ with \( J_1=[t-1] \) and \( J\setminus J_1 \subseteq \{t+1,t+2,\dots,n\}
\). Let \( J'=\{j-t:j\in J\setminus J_1\} \). Then \( (n-t,J')\in X \) and \(
\wt(n,J)=x^{t}y^{t-1}z^{\binom{t}{2}}\wt(n-t,J') \). Since every \( (n,J)\in X
\) with \( J\ne[n] \) is obtained from \( (n-t,J')\in X \) for some \( t\ge1 \)
and \( J' \) in this way, we have
\begin{equation}\label{eq:J!=[n]}
  \sum_{(n,J)\in X, J\ne[n]} \wt(n,J) = 
  \sum_{t\ge1}x^{t}y^{t-1}z^{\binom{t}{2}}\sum_{(n-t,J')\in X}\wt(n-t,J')
  = xP\cdot T_A(x,y,z).
\end{equation}
By \eqref{eq:nJ}, \eqref{eq:J=[n]}, and \eqref{eq:J!=[n]}, we have
$T_A(x,y,z)=P+xP\cdot T_A(x,y,z)$, which implies the first identity \eqref{eq:T_A}.

The second identity \eqref{eq:T_B} can be proved similarly if we consider the
Coxeter system \( (B_n,S) \) with \( S=[n] \) and the Coxeter graph as shown
in Figure~\ref{fig:BD}. 
The only difference is that if \( J=[n] \), then \(
\wt(n,J)=x^ny^nz^{n^2} \) since $\ell(w_0(B_n))=n^2$.

\begin{figure}
  \centering
  \includegraphics[scale=0.8]{./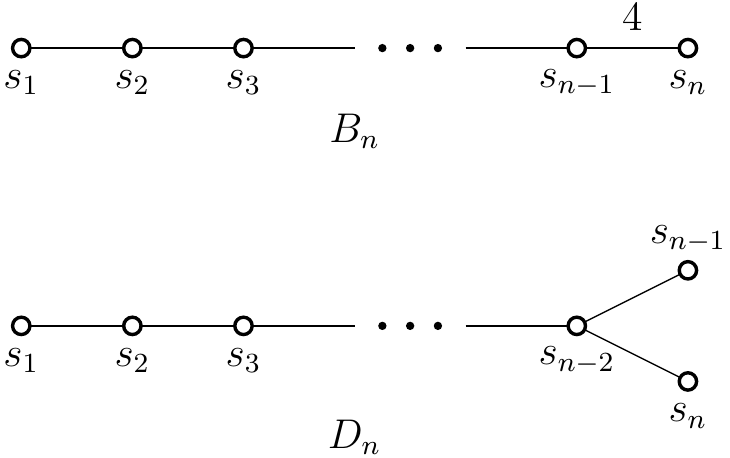}
  \caption{The Coxeter graphs of \( B_n \) and \( D_n \).}
  \label{fig:BD}
\end{figure}

For the third identity \eqref{eq:T_D}, we consider Coxeter system \( (D_n,S) \)
with \( S=[n] \) and the Coxeter graph as shown in Figure~\ref{fig:BD}. The
proof is similar to the case of the first identity except that
$\ell(w_0(D_n))=n^2-n$ and we need to consider the cases that the smallest
positive integer \( t \) not contained in \( J \) is \( n-1 \) or \( n \)
separately. It is not hard to check that \( T_D(x,y,z) \) satisfies
$T_D(x,y,z)=R+xPT_D(x,y,z)+2x(P-1)+x^2P$, which implies the third identity.
\end{proof}

By \eqref{eq:TA}, \eqref{eq:TB}, and \eqref{eq:TD}, substituting $y=-1$ and
$z=q$ in the formulas of Proposition~\ref{prop:genfc} gives the following
formulas for the generating functions of \(\hat{\chi}_{A_n}(q)\),
\(\hat{\chi}_{B_n}(q)\), and \(\hat{\chi}_{D_n}(q)\).

\begin{thm}
We have
\begin{align*}
  & \sum_{n\ge0} \hat{\chi}_{A_n}(q) x^n=\frac{\sum_{n\ge0} (-1)^n q^{\binom{n+1}2} x^n}{\sum_{n\ge0} (-1)^n q^{\binom{n}2} x^n},\\
  & \sum_{n\ge0} \hat{\chi}_{B_n}(q) x^n=\frac{\sum_{n\ge0} (-1)^n q^{n^2} x^n}{\sum_{n\ge0} (-1)^n q^{\binom{n}2} x^n},\\
  & \sum_{n\ge2} \hat{\chi}_{D_n}(q) x^n=\frac{\sum_{n\ge2} (-1)^n (q^{\binom{n-1}2}-2q^{\binom{n}2}+q^{2\binom{n}2}) x^n}{\sum_{n\ge0} (-1)^n q^{\binom{n}2} x^n}.  
\end{align*}
\end{thm}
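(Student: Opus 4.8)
The plan is to specialize the three generating function identities in Proposition~\ref{prop:genfc} by setting $y=-1$ and $z=q$, and then simplify the resulting rational expressions so that the numerators and denominators match the claimed closed forms. By \eqref{eq:TA}, \eqref{eq:TB}, and \eqref{eq:TD}, the left-hand sides $T_A(x,-1,q)$, $T_B(x,-1,q)$, and $T_D(x,-1,q)$ are exactly $\sum_{n\ge0}\hat\chi_{A_n}(q)x^n$, $\sum_{n\ge0}\hat\chi_{B_n}(q)x^n$, and $\sum_{n\ge2}\hat\chi_{D_n}(q)x^n$ respectively, so the theorem reduces to a purely formal manipulation of power series.

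First I would record what $P$, $Q$, and $R$ become after the substitution: $P|_{y=-1,z=q}=\sum_{n\ge0}(-1)^n q^{\binom{n+1}2}x^n$, $Q|_{y=-1,z=q}=\sum_{n\ge0}(-1)^n q^{n^2}x^n$, and $R|_{y=-1,z=q}=\sum_{n\ge2}(-1)^n q^{n^2-n}x^n$. The crucial bookkeeping observation is that the denominator $1-xP$, after substitution, equals $1-\sum_{n\ge0}(-1)^n q^{\binom{n+1}2}x^{n+1}=1+\sum_{m\ge1}(-1)^m q^{\binom{m}2}x^m=\sum_{m\ge0}(-1)^m q^{\binom{m}2}x^m$, since $\binom{m}2=\binom{(m-1)+1}2$. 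This is precisely the common denominator appearing in all three displayed formulas, so the formula for $\sum\hat\chi_{A_n}(q)x^n$ follows immediately from \eqref{eq:T_A}, and the formula for $\sum\hat\chi_{B_n}(q)x^n$ follows immediately from \eqref{eq:T_B} together with the identity for $Q|_{y=-1,z=q}$.

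The only real work is the $D_n$ case. Starting from \eqref{eq:T_D}, the numerator after substitution is $x^2P+2x(P-1)+R$ evaluated at $y=-1,z=q$. I would expand $x^2P=\sum_{n\ge0}(-1)^n q^{\binom{n+1}2}x^{n+2}$ and reindex to $\sum_{m\ge2}(-1)^m q^{\binom{m-1}2}x^m$ (using $\binom{(m-2)+1}2=\binom{m-1}2$ and that $(-1)^{m-2}=(-1)^m$), then expand $2x(P-1)=2\sum_{n\ge1}(-1)^n q^{\binom{n+1}2}x^{n+1}=2\sum_{m\ge2}(-1)^{m-1}q^{\binom{m}2}x^m=-2\sum_{m\ge2}(-1)^{m}q^{\binom{m}2}x^m$ (using $\binom{(m-1)+1}2=\binom{m}2$), and note $R|_{y=-1,z=q}=\sum_{m\ge2}(-1)^m q^{m^2-m}x^m=\sum_{m\ge2}(-1)^m q^{2\binom{m}2}x^m$. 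Adding these three series term by term gives $\sum_{m\ge2}(-1)^m\bigl(q^{\binom{m-1}2}-2q^{\binom{m}2}+q^{2\binom{m}2}\bigr)x^m$, which is exactly the claimed numerator; dividing by $1-xP=\sum_{m\ge0}(-1)^m q^{\binom{m}2}x^m$ completes the proof.

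I do not anticipate a genuine obstacle here — every step is a routine reindexing of a power series — so the main thing to be careful about is the sign and exponent bookkeeping in the three reindexings for the $D_n$ numerator, in particular keeping track of the shift $n\mapsto m=n+1$ or $m=n+2$ and the resulting sign change $(-1)^n=(-1)^{m-1}$ or $(-1)^m$, and confirming each time that the binomial-coefficient exponent collapses as claimed via $\binom{k+1}2-\binom{k}2=k$ identities. Once those are verified, the statement follows directly from Proposition~\ref{prop:genfc} and equations \eqref{eq:TA}, \eqref{eq:TB}, \eqref{eq:TD}.
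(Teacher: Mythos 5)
Your proposal is correct and follows exactly the paper's approach: the paper derives this theorem by substituting $y=-1$, $z=q$ into Proposition~\ref{prop:genfc} and invoking \eqref{eq:TA}--\eqref{eq:TD}, which is precisely what you do. Your reindexing details (in particular $1-xP\mapsto\sum_{m\ge0}(-1)^mq^{\binom{m}{2}}x^m$ and the three-term expansion of the $D_n$ numerator) are all verified correct; the paper simply leaves them implicit.
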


\section{Modified characteristic polynomials for affine Coxeter groups}
\label{sec:modif-char-polyn-1}

In this section we express the modified characteristic polynomial \(
\hat{\chi}_W(q) \) of an affine Coxeter group \( W \) using those of finite Coxeter
groups.

Recall that the modified characteristic polynomial $\hat{\chi}_W(q)$ is defined
as a series if $W$ is an infinite poset. However, if \( W \) is an affine
Coxeter group with finite generators, \eqref{eq:chrewrt} shows that
$\hat{\chi}_W(q)$ is a polynomial. Note also that, for an affine Coxeter group,
we only need to consider the elements $w_0(J)$ for $J\subsetneq S$ since
$w_0(S)$ does not exist, i.e.,
\begin{equation}\label{eq:chrewrt2}
  \hat{\chi}_W(q)=\sum_{J\subsetneq S} (-1)^{|J|}q^{\ell(w_0(J))}.  
\end{equation}

The Coxeter graph of an irreducible affine Coxeter group contains the Coxeter
graph of a finite Coxeter group as a subgraph. Using this observation we can
express the characteristic polynomial of an affine Coxeter group in terms of
those of finite Coxeter groups.

\begin{thm}\label{thm:infintofin}
 We have
\begin{align*}
  \hat{\chi}_{\tilde{A}_n}(q)&=\hat{\chi}_{A_n}(q)+\sum^n_{k=1} k(-1)^kq^{\binom{k+1}{2}}\hat{\chi}_{A_{n-k-1}}(q),\\
  \hat{\chi}_{\tilde{B}_n}(q)&=\hat{\chi}_{B_n}(q)+\sum_{k=0}^{n}(-1)^{k}q^{k(k-1)}\hat{\chi}_{B_{n-k}}(q),\\
  \hat{\chi}_{\tilde{C}_n}(q)&=\sum_{k=0}^{n}(-1)^kq^{k^2}\hat{\chi}_{B_{n-k}}(q),\\
  \hat{\chi}_{\tilde{D}_n}(q)&=\hat{\chi}_{D_n}(q)+(-1)^nq^{n(n-1)}+\sum_{k=0}^{n}(-1)^kq^{k(k-1)}\hat{\chi}_{D_{n-k}}(q),
\end{align*}
where we assume \( n\ge2 \), \( n\ge3 \), \( n\ge2 \), and \( n\ge 4 \) for \(
\tilde{A}_n \), \( \tilde{B}_n \), \( \tilde{C}_n \), and \( \tilde{D}_n \),
respectively. We define $\hat{\chi}_{A_i} (q) = \hat{\chi}_{B_i} (q)=1$ for $i\le 0$ and $\hat{\chi}_{D_i} (q)=1$ for $i\le 1$.
\end{thm}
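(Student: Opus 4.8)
The plan is to use the formula $\hat{\chi}_W(q)=\sum_{J\subsetneq S}(-1)^{|J|}q^{\ell(w_0(J))}$ from \eqref{eq:chrewrt2}, which is valid for affine Coxeter groups because $w_0(S)$ does not exist. For each affine type $\tilde{X}_n$, I would draw the Coxeter graph (a cycle for $\tilde{A}_n$, and a line with one or two extra "branch" nodes at the ends for $\tilde{B}_n,\tilde{C}_n,\tilde{D}_n$) and classify the proper subsets $J\subsetneq S$ according to how they sit inside this graph. The key combinatorial point is that a proper subset $J$ of the affine node set, viewed as an induced subgraph, breaks into connected components, and $\ell(w_0(J))=\sum_i \ell(w_0(J_i))$ over the components $J_i$; moreover each component is an induced path or "small" diagram whose associated finite Coxeter group is of type $A$, $B/C$, or $D$, so its $w_0$-length is known from the values $\binom{m+1}{2}$, $m^2$, $m(m-1)$ quoted before the theorem. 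The right-hand sides are exactly what one gets by summing $\prod_i (-1)^{|J_i|}q^{\ell(w_0(J_i))}$ over these configurations and recognizing the resulting subsums as $\hat{\chi}$ of smaller finite groups.

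Concretely, for $\tilde{A}_n$ the node set is a cycle $\mathbb{Z}/(n+1)$, and a proper subset is obtained by deleting a nonempty set of nodes from the cycle, leaving a disjoint union of paths along the cycle; summing over all such $J$ and factoring out the "first gap" gives the two contributions $\hat{\chi}_{A_n}(q)$ (the case where the subset misses exactly the affine node $0$, i.e.\ lives inside a genuine $A_n$) together with a term $k(-1)^kq^{\binom{k+1}{2}}\hat{\chi}_{A_{n-k-1}}(q)$ coming from a component of type $A_{k-1}$ that wraps around a chosen position ($k$ choices of where it sits, then the complement is an $A_{n-k-1}$ inside a line) — here I would set up a small "transfer"/first-block decomposition exactly like the one used in the proof of Proposition~\ref{prop:genfc}. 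For $\tilde{B}_n$, $\tilde{C}_n$, $\tilde{D}_n$ I would instead peel off the affine end of the diagram: either the extra affine node is not in $J$ — in which case $J$ sits in the finite $B_n$ (resp.\ $D_n$) and contributes $\hat{\chi}_{B_n}$ (resp.\ $\hat{\chi}_{D_n}$, plus a correction term for the $D$-diagram's fork symmetry which accounts for the extra $(-1)^n q^{n(n-1)}$) — or the affine end node together with a consecutive block of $k$ nodes forms a component of type $B_k$, $C_k$, or $D_k$ (so the power of $q$ is $k(k-1)$, $k^2$, or $k(k-1)$ respectively), and the remaining nodes form a $B_{n-k}$ or $D_{n-k}$, yielding the stated sums $\sum_k (-1)^k q^{\ell(w_0(\text{end block}))}\hat{\chi}_{B_{n-k}}(q)$ etc.

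The main obstacle will be bookkeeping at the ends of the $\tilde{B}/\tilde{C}/\tilde{D}$ diagrams: one has to be careful about which finite type each terminal component actually is (a path including a doubled edge is type $B$/$C$, a path including the forked end of $\tilde{D}$ can be type $D$ or can degenerate to type $A$ or a product $A_1\times A_1$ for small sizes), and about the boundary cases $k=0$, $k=n$, and the small-rank coincidences $B_1\cong A_1$, $D_2\cong A_1\times A_1$, $D_3\cong A_3$ already noted in the text. This is exactly why the conventions $\hat{\chi}_{A_i}=\hat{\chi}_{B_i}=1$ for $i\le0$ and $\hat{\chi}_{D_i}=1$ for $i\le1$ are imposed, and why the extra isolated term $(-1)^nq^{n(n-1)}$ appears for $\tilde{D}_n$; I would verify each of these degenerate terms by hand against \eqref{eq:chrewrt2} for small $n$. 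Once the component decomposition and the length values are in place, each identity reduces to collecting terms, so I expect no serious difficulty beyond this case analysis.
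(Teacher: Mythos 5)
Your proposal follows essentially the same route as the paper: apply \eqref{eq:chrewrt2}, split the sum over $J\subsetneq S$ according to the connected component containing the affine node(s), use the known lengths $\binom{m+1}{2}$, $m^2$, $m(m-1)$ of the longest elements, and recognize the remaining factor as $\hat{\chi}$ of a smaller finite group (with $k$ positional choices for the wrapping component in type $\tilde{A}_n$). One caution as you execute the bookkeeping you already flagged: in $\tilde{B}_n$ and $\tilde{D}_n$ the affine node creates a fork, so the end block containing it is of type $D_k$ (hence the exponent $k(k-1)$, not $k^2$), a component of $k$ nodes on the $\tilde A_n$ cycle is of type $A_k$ rather than $A_{k-1}$, and the ``affine node absent'' case requires inclusion--exclusion over the two fork tips, yielding $2\hat{\chi}_{B_n}(q)-\hat{\chi}_{B_{n-1}}(q)$ rather than a single $\hat{\chi}_{B_n}(q)$ --- these are exactly the $k=0,1$ terms of your sum, while the isolated $(-1)^n q^{n(n-1)}$ for $\tilde D_n$ arises from the two choices of component at the far fork when $k=n-1$.
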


\begin{proof}

  \begin{figure}
    \includegraphics[scale=0.8]{./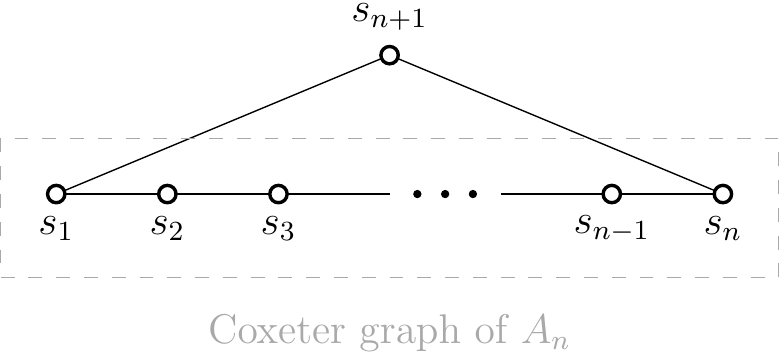}
    \caption{The Coxeter graph of \( \tilde{A}_n \) containing the Coxeter
      graph of \( A_n \).}
    \label{fig:tA}
  \end{figure}

  \textbf{Type $\tilde{A}_n$}: The Coxeter graph of $W=\tilde{A}_n$ contains
  that of $A_n$ as shown in Figure~\ref{fig:tA}. Let
  $S=\{s_1,s_2,\dots,s_{n+1}\}$ be the set of generators. By \eqref{eq:chrewrt2},
  we have
\begin{equation}\label{eq:chi(tA)}
  \hat{\chi}_{\tilde{A}_n}(q)
  =\sum_{J\subsetneq S, s_{n+1}\notin J} (-1)^{|J|} q^{\ell(w_0(J))}
  +\sum_{J\subsetneq S, s_{n+1}\in J} (-1)^{|J|} q^{\ell(w_0(J))}.
\end{equation}
Note that the first sum of the right-hand side of \eqref{eq:chi(tA)} is
given by
\begin{equation}\label{eq:chi(tA)1}
  \sum_{J\subsetneq S, s_{n+1}\notin J} (-1)^{|J|} q^{\ell(w_0(J))} = \hat{\chi}_{A_n}(q).
\end{equation}
Thus we only need to compute the second sum.

\begin{figure}
  \includegraphics[scale=0.8]{./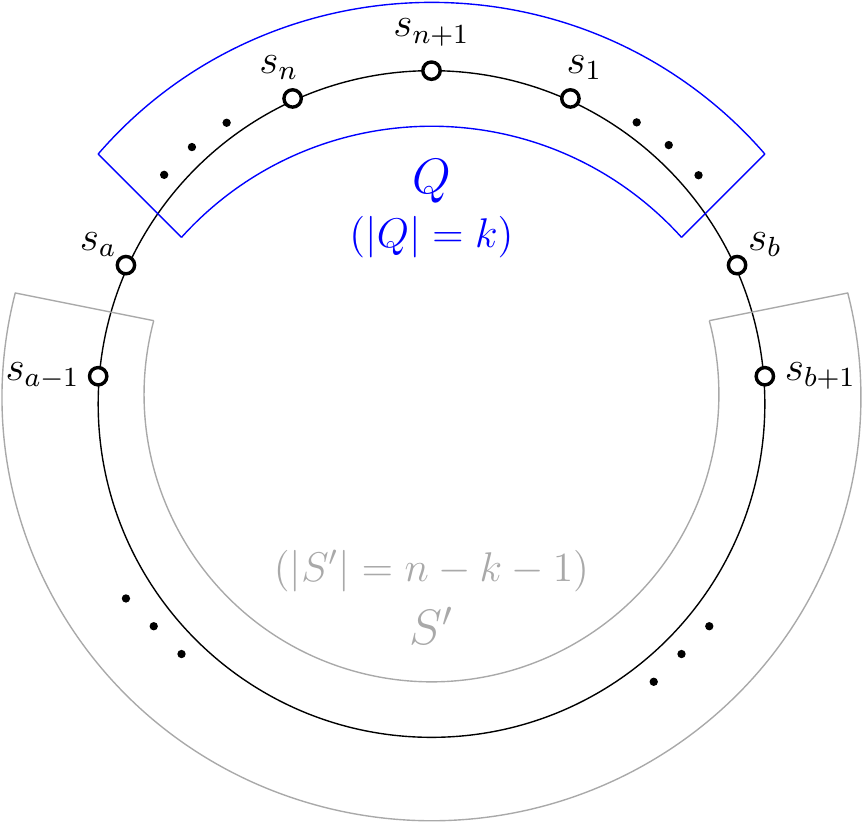}
  \caption{The Coxeter graph of \( \tilde{A}_n \) and the subset \( Q \).}
  \label{fig:tA2}
\end{figure}

Suppose \( J\subsetneq S \) with \( s_{n+1}\in J \). Let $Q\subseteq J$ be the
set of generators in the connected component containing $s_{n+1}$ and let
$|Q|=k$. We also define $J'=J\setminus Q$ so that $J=Q\cup J'$. If $k\ge n-1$,
then $J'=\emptyset$. Suppose $k< n-1$ and let $s_a,s_b$ be the generators that
are adjacent to an end vertex of $Q$ in the Coxeter graph of $\tilde{A}_n$, see
Figure~\ref{fig:tA2}. Then $J'$ is a subset of $S'=S\setminus (Q\cup
\{s_a,s_b\})$. Note that $W_{S'}\cong A_{n-k-1}$ since the graph of $S'$ is a
connected path with $|S'|=n-k-1$. Then
\[
  (-1)^{|J|} q^{\ell(w_0(J))}=(-1)^{|Q|+|J'|} q^{\ell(w_0(Q))+\ell(w_0(J'))}
  = (-1)^{k+|J'|} q^{\binom{k+1}{2}+\ell(w_0(J'))}.
\]
This implies that
\begin{equation}\label{eq:chi(tA)2}
  \sum_{s_{n+1}\in J\subsetneq S} (-1)^{|J|} q^{\ell(w_0(J))}
  =\sum^n_{k=1} k(-1)^kq^{\binom{k+1}{2}}\hat{\chi}_{A_{n-k-1}}(q).
\end{equation}
By \eqref{eq:chi(tA)}, \eqref{eq:chi(tA)1}, and \eqref{eq:chi(tA)2},
we obtain the formula for \( \hat{\chi}_{\tilde{A}_n}(q) \).

For the rest of the proof we denote by \(S=\{s_0,s_1,\dots,s_n\}\) the set of
generators of \(\tilde{B}_n\), \(\tilde{C}_n\), or \(\tilde{D}_n\).

\begin{figure}
  \includegraphics[scale=0.8]{./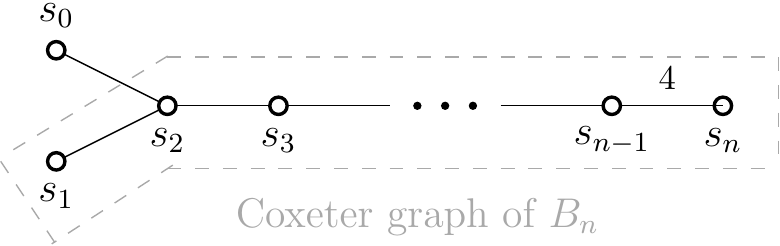}
  \caption{The Coxeter graph of \( \tilde{B}_n \) containing the Coxeter
    graph of \( B_n \).}
  \label{fig:tB}
\end{figure}

\textbf{Type $\tilde{B}_n$}: The Coxeter graph of $W=\tilde{B}_n$ includes the
graph of $B_n$ as shown in Figure~\ref{fig:tB}.
By \eqref{eq:chrewrt2}, we have
\begin{equation}
  \label{eq:chi(tB)}
  \hat{\chi}_{\tilde{B}_n}(q)
  = \sum_{J\subsetneq S,\{s_0,s_1\}\not\subseteq J} (-1)^{|J|}q^{\ell(w_0(J))}
  + \sum_{J\subsetneq S,\{s_0,s_1\}\subseteq J} (-1)^{|J|}q^{\ell(w_0(J))}.
\end{equation}
First we observe the case that \(\{s_0,s_1\}\not\subseteq J\).
If $s_0\notin J$, then since $J$ is a subset of the set of generators of $B_n$,
we have $\sum_{J\subsetneq S, s_0\notin J}
(-1)^{|J|}q^{\ell(w_0(J))}=\hat{\chi}_{B_n}(q)$. We have the same identity for
the case $s_1\notin J$. If $s_0, s_1\not\in J$, then we have
$\sum_{J\subsetneq S, s_0,s_1\notin J}
(-1)^{|J|}q^{\ell(w_0(J))}=\hat{\chi}_{B_{n-1}}(q)$. Thus,
\begin{equation}
  \label{eq:chiB2}
  \sum_{J\subsetneq S,\{s_0,s_1\}\not\subseteq J} (-1)^{|J|}q^{\ell(w_0(J))}=
  2\hat{\chi}_{B_n}(q)-\hat{\chi}_{B_{n-1}}(q).
\end{equation}

Suppose now that $\{s_0,s_1\}\subseteq J$. Let \( k \) be the smallest
integer such that \( s_k\not\in J \) and let \( Q=\{s_0,\dots,s_{k-1}\} \) and
$J'=J\setminus Q$. Then we have \( 2\le k\le n \), $W_Q\cong D_k$, and
\( J'\subseteq\{s_{k+1},\dots,s_n\} \). 
Since $\ell(w_0(Q))=k(k-1)$, we have
\[
  (-1)^{|J|}q^{\ell(w_0(J))} = (-1)^{|Q|+|J'|} q^{\ell(w_0(Q))+\ell(w_0(J'))}
  =(-1)^k q^{k(k-1)}(-1)^{|J'|} q^{\ell(w_0(J'))}.
\]
Therefore
\begin{align}
  \notag
  \sum_{J\subsetneq S,\{s_0,s_1\}\subseteq J} (-1)^{|J|}q^{\ell(w_0(J))}
  &= \sum_{k=2}^{n} (-1)^k q^{k(k-1)}\sum_{J'\subseteq\{s_{k+1},\dots,s_n\}}(-1)^{|J'|} q^{\ell(w_0(J'))}\\
  \label{eq:chiB3}
  &= \sum_{k=2}^{n} (-1)^k q^{k(k-1)}\hat{\chi}_{B_{n-k}}(q).
\end{align}

Combining  \eqref{eq:chi(tB)}, \eqref{eq:chiB2}, and \eqref{eq:chiB3} gives the
desired formula for  \( \hat{\chi}_{\tilde{B}_n}(q) \).

\begin{figure}
  \includegraphics[scale=0.8]{./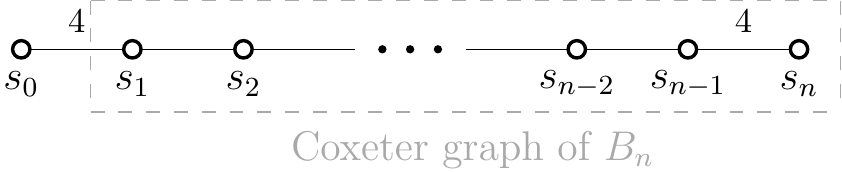}
  \caption{The Coxeter graph of \( \tilde{C}_n \) containing the Coxeter
    graph of \( B_n \).}
  \label{fig:tC}
\end{figure}

\textbf{Type $\tilde{C}_n$}: The Coxeter graph of $W=\tilde{C}_n$ includes the
graph of $B_n$ as shown in Figure~\ref{fig:tC}. By \eqref{eq:chrewrt2}, we have
\begin{equation}
  \label{eq:chi(tC)}
  \hat{\chi}_{\tilde{C}_n}(q)
  = \sum_{J\subsetneq S,s_0\not\in J} (-1)^{|J|}q^{\ell(w_0(J))}
  + \sum_{J\subsetneq S,s_0\in J} (-1)^{|J|}q^{\ell(w_0(J))}.
\end{equation}
This can be proved by similar arguments as in the proof of \(\tilde{B}_n\),
where in this case, for \( 1\le k\le n \), we have $W_Q\cong B_k$ and
$\ell(w_0(Q))=k^2$.

\begin{figure}
  \includegraphics[scale=0.8]{./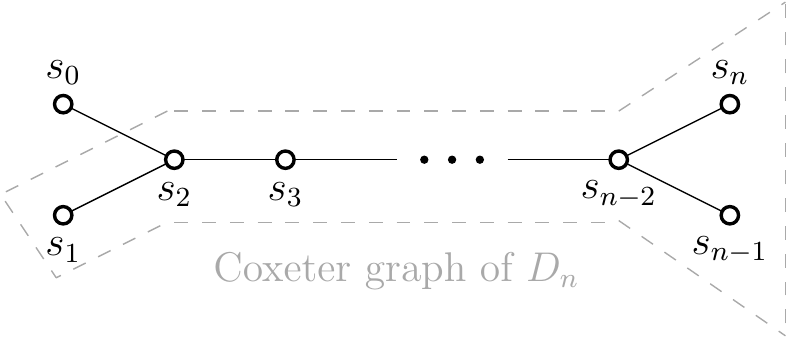}
  \caption{The Coxeter graph of \( \tilde{D}_n \) containing the Coxeter
    graph of \( D_n \).}
  \label{fig:tD}
\end{figure}

\textbf{Type $\tilde{D}_n$}: The Coxeter graph of $W=\tilde{D}_n$ includes the graph of $D_n$ as shown in Figure~\ref{fig:tD}.
By \eqref{eq:chrewrt2}, we have
\begin{equation}
  \label{eq:chi(tD)}
  \hat{\chi}_{\tilde{D}_n}(q)
  = \sum_{J\subsetneq S,\{s_0,s_1\}\not\subseteq J} (-1)^{|J|}q^{\ell(w_0(J))}
  + \sum_{J\subsetneq S,\{s_0,s_1\}\subseteq J} (-1)^{|J|}q^{\ell(w_0(J))}.
\end{equation}

 We can obtain the formula by similar arguments as in the proof of $\tilde{B}_n$. 
  If $\{s_0,s_1\}\nsubseteq J$, we have
\begin{equation}
  \label{eq:chiD2}
  \sum_{J\subsetneq S,\{s_0,s_1\}\not\subseteq J} (-1)^{|J|}q^{\ell(w_0(J))}=2\hat{\chi}_{D_n}(q)-\hat{\chi}_{D_{n-1}}(q).
\end{equation}

Suppose that $\{s_0,s_1\}\subseteq J$.
Let \( k \) be the smallest integer such that \( s_k\not\in J \) and let \( Q=\{s_0,\dots,s_{k-1}\} \) and $J'=J\setminus Q$. 
Then, for \( 2\le k\le n-2 \) or \(k=n\), we have $W_Q\cong D_k$ and
\( J'\subseteq\{s_{k+1},\dots,s_n\} \).
For \(k=n-1\), there are two cases such that \(Q\) and \(Q\cup\{s_n\}\).
Then we have \(W_Q\cong D_{n-1}\) and \(W_{Q\cup\{s_n\}}\cong D_n\).
In both cases we have \(J'=\emptyset\).
Therefore
\begin{align}
  \notag
  \sum_{J\subsetneq S,\{s_0,s_1\}\subseteq J} (-1)^{|J|}q^{\ell(w_0(J))}
  &= \sum_{k=2}^{n-2} (-1)^k q^{k(k-1)}\sum_{J'\subseteq\{s_{k+1},\dots,s_n\}}(-1)^{|J'|} q^{\ell(w_0(J'))}\\
  \notag
  &\qquad +(-1)^{n-1}q^{(n-1)(n-2)}+2(-1)^n q^{n(n-1)}\\
  \label{eq:chiD3}
  &= \sum_{k=2}^{n-2} (-1)^k q^{k(k-1)}\hat{\chi}_{D_{n-k}}(q)+(-1)^{n-1}q^{(n-1)(n-2)} +2(-1)^n q^{n(n-1)}.
\end{align}

Combining  \eqref{eq:chi(tD)}, \eqref{eq:chiD2}, and \eqref{eq:chiD3} gives the
desired formula for  \( \hat{\chi}_{\tilde{D}_n}(q) \).
\end{proof}

Note that using Proposition~\ref{prop:chmodch} the polynomials in
Theorem~\ref{thm:infintofin} can also be written as sums of characteristic
polynomials of finite Coxeter groups.

\section*{Acknowledgments}

The authors would like to thank Seung Jin Lee and Yibo Gao for helpful
discussions.

\end{document}